\documentclass{fundam}

\publyear{22}
\papernumber{2102}
\volume{185}
\issue{1}

\usepackage{url} 
\usepackage[ruled,lined]{algorithm2e}
\usepackage{graphicx}
\usepackage{tikz}
\usetikzlibrary{automata, positioning, arrows}

\begin{document}

\title{On the Relationship Between Semiring-Induced Valuation Algebra Isomorphisms and Semiring Isomorphisms}

\address{School of Mathematics and Statistics,
	Jiangsu Normal University, 57 Heping Road, Xuzhou, China}

\author{Xuechong Guan\\
School of Mathematics and Statistics \\
Jiangsu Normal University \\ 57 Heping Road, Xuzhou, China\\
guanxc{@}foxmail.com
 } 

\maketitle

\runninghead{Xuechong Guan}{On the Relationship Between Semiring-Induced Valuation Algebra Isomorphisms and Semiring Isomorphisms}

\begin{abstract}
  This paper establishes a bidirectional relationship between isomorphisms of semirings and isomorphisms of their induced valuation algebras. We show that a semiring isomorphism is sufficient for a valuation algebra isomorphism. Conversely, for additively idempotent semirings without multiplicatively idempotent zero divisors, the existence of a valuation algebra isomorphism implies that the underlying semirings are isomorphic.
\end{abstract}

\begin{keywords}
Valuation algebra, isomorphism, semiring
\end{keywords}

\section{Introduction}\label{sec:0}
A valuation algebra is an abstract formalism rooted in artificial intelligence that enables local computation and subsumes a wide range of established frameworks, including constraint systems, belief function systems, database theory, and formal logic \cite{RefB1,VBS,shafer,Wilson}. Its core structural properties have also been formally identified and validated in the theories of imprecise probabilities and soft sets \cite{probabilities,guan}. In recent years, significant new advances in valuation algebra research have emerged in domains including approximate reasoning, local computation, and distributed systems \cite{haenni,pouly,concurrent}.

A semiring, the second key algebraic structure examined in this work, was first explicitly introduced by H. S. Vandiver in 1934 and has since become a central subject of systematic mathematical investigation. Over subsequent decades, semiring theory has undergone continuous theoretical development, with broad applications established across diverse fields such as linear algebra, optimization theory, and theoretical computer science. 
Within the formal framework of valuation algebras, semiring-induced constraint systems were first systematically developed in prior work \cite{constraint1,Constraint}, and subsequent follow-up studies -- most notably the detailed analyses presented in \cite{examples} -- have contributed a rich body of illustrative examples and practical applications. References \cite{semiring,pouly} formally established the mechanism by which valuation algebras are induced by semirings, and rigorously characterized the deep structural relationship between the algebraic properties of valuation algebras and their underlying semirings. These developments have established semiring-induced valuation algebras as a significant research topic within the field.

The concept of valuation algebra homomorphism was first proposed in Reference \cite{information}. It explores the connections between valuation algebras from the perspective of algebraic structures, and conducts in-depth research on the fundamental problems related to homomorphisms. 
For this specific class of valuation algebras induced by semirings, it is a topic worthy of research to systematically investigate the mutual influence between the relationship among semirings and the relationship among valuation algebras.
The primary objective of the present work is to investigate the correspondence between the homomorphisms of semiring-induced valuation algebras and the homomorphisms of their underlying semirings, thereby elucidating the intrinsic algebraic structural connections between these two closely interrelated research domains. To achieve this goal, Section 2 of this paper presents the necessary preliminaries on semirings and valuation algebras, together with a systematic introduction to the notations adopted throughout the entire manuscript. Building on these preliminaries, Section 3 and Section 4 present a sequence of proofs that characterize the intrinsic, bidirectional relationship between semiring homomorphisms and the homomorphisms of their induced valuation algebras.

\section{Notations and Preliminaries}\label{sec:1}

In this paper, variables will be designated by capital letters (e.g., $X,Y,\cdots$). Lower-case letters such as $x,y,\cdots$ denote sets of variables. The symbol $\Omega_X$ represents the finite set of possible values of $X$, which is referred to as the {\it frame} of $X$. For a nonempty set of variables $s$, let $\Omega_s$ denote the Cartesian product of the frames $\Omega_X$ for all $X\in s$, i.e., $$\Omega_s= \prod_{X\in s} \Omega_X.$$ This set $\Omega_s$ is called the frame of $s$. If $s$ is empty, we define $\Omega_\emptyset = \{\epsilon\}$. An element of $\Omega_s$ is called a configuration of $s$, and can be expressed in the form $(\omega_X)_{X \in s}$, where $\omega_X\in \Omega_X$. For convenience, we also use bold-faced lower-case letters (e.g., ${\bf x},{\bf y},\cdots$) to denote these configurations. 

If ${\bf x}$ is a configuration with domain $s$ and $t\subseteq s$, then ${\bf x}^{\downarrow t}$ denotes the projection of ${\bf x}$ onto the subdomain $t$. In particular, ${\bf x}^{\downarrow \emptyset} = \epsilon$. A configuration ${\bf x}\in \Omega_s$ can be decomposed into components corresponding to finite disjoint subsets of $s$, such as ${\bf x} = ({\bf x}^{\downarrow t },  {\bf x}^{\downarrow s-t})$, where $t\subseteq s$.

The following introduces terminology related to semirings. Let $A$ be a set with two binary operations $+$ and $\times$ defined on it. We call a tuple $\langle A,+,\times\rangle $ a semiring, if both operations + and
$\times$ are commutative and associative, and if $\times$ distributes over $+$. 
If there is an element $0 \in A$ such that $0+a = a +0 = a$ and $0\times a = a \times 0 = 0 $ for all $a\in A$, then $A$ is called a semiring with zero element. An element $1 \in A$ is said to be a unit element, if $1\times a = a \times 1 = a$ for all $a \in A$. In the sequel, we assume a semiring $A$ always has a unit element and a zero element. 

A semiring $A$ is said to have no multiplicative zero divisors if for any two non-zero elements $a, b\in A$, their product $a\times b\neq 0$. Equivalently, whenever $a \times b = 0$ for $a, b \in A$, it necessarily follows that $a=0$ or $b=0$.

Now we consider a non-empty finite set $r$ of variables with finite frames and a semiring $A$. We assume that at least one frame $\Omega_X$ contains at least two elements.
A semiring valuation $\phi$ with domain $d(\phi)=s\subseteq r$ is defined to be a function that associates a value from $A$ with a configuration ${\bf x}\in \Omega_s$, $\phi:\Omega_s \rightarrow A$. Let $\Phi$ denote the set of all valuations and $D={\cal P}(r)$.
We define the operations in the $(\Phi,D)$ using the operations $+$ and $\times$ in the semiring $A$:

(1) $Combination: \otimes :\Phi\times\Phi \rightarrow \Phi$, for $\phi,\psi\in \Phi$
and ${\bf x}\in \Omega_{d(\phi)\cup d(\psi)}$ we define
$$\phi\otimes\psi({\bf x})=\phi({\bf x}^{\downarrow {d(\phi)}})\times \psi({\bf x}^{\downarrow {d(\psi)}}).$$

(2)$Marginalization$: $\downarrow :\Phi\times D\rightarrow \Phi$ is defined for
all $\phi\in \Phi, t\subseteq d(\phi)=s$ and ${\bf x}\in \Omega_t$ by
$$\phi^{\downarrow t}({\bf x})=\sum\limits_{{\bf z}\in \Omega_s:{\bf z}^{\downarrow t}={\bf x}}
\phi({\bf z}).$$

It has been demonstrated that the pair $(\Phi,D)$ satisfies the axioms of valuation algebras, and is referred to as a valuation algebra induced by the semiring $A$ \cite{semiring}. We proceed to formally introduce this concept of valuation algebras in what follows.

\begin{definition}\label{definition:1}
	{\rm \textsuperscript{\cite{information}}}
	{\rm  Let $\Phi$ be a set of valuations with domains in $D$, the lattice of subsets of the set $r$ of variables. Suppose there are three operations defined:
		
		1.{\it Labeling} : $\Phi \rightarrow D ;\phi\mapsto d(\phi)$,
		
		2.{\it Combination} : $\Phi\times \Phi \rightarrow \Phi;(\phi,\psi)\mapsto \phi\otimes\psi$,
		
		3.{\it Projection} : $\Phi\times D \rightarrow \Phi;(\phi,x)\mapsto \phi^{\downarrow x}$.
		
		If the system $(\Phi,D)$ with these operations satisfying with the following axioms is called a valuation algebra:
		
		(1) {\it Semigroup}: $\Phi$ is associative and commutative under combination.
		For all $s\in D$ there is an element $e_s$ with $d(e_s)=s$ such that for all
		$\phi\in \Phi$ with $d(\phi)=s, e_s\otimes \phi=\phi\otimes e_s=\phi$.
		$e_s$ is called a neutral element of $\Phi_s$.
		
		(2) {\it Labeling}: For $\phi,\psi \in \Phi$, $d(\phi\otimes\psi) = d(\phi)\cup d(\psi)$.
		
		(3) {\it Marginalization}: For $\phi\in \Phi, x\in D, x\subseteq d(\phi), d(\phi^{\downarrow x})=x$.
		
		(4) {\it Transitivity}: For $\phi\in \Phi$ and $x\subseteq y\subseteq d(\phi),
		(\phi^{\downarrow y})^{\downarrow x}=\phi^{\downarrow x}$.
		
		(5) {\it Combination}: For $\phi,\psi \in \Phi$ with $d(\phi)=x, d(\psi)=y,
		(\phi\otimes\psi)^{\downarrow x}=\phi\otimes\psi^{\downarrow x\cap y}$.
		
		(6) {\it Neutrality}: 
		For $x,y\in D$, $e_x\otimes e_y=e_{x\cup y}$.
		
		Moreover, we often introduce a property in valuation algebras referred to as the {\it Stability} axiom:
		
		(7) {\it Stability} : For $x,y\in D, x\subseteq y$, $e_y^{\downarrow x}=e_x$.
		
	}
\end{definition}

The primary conclusions and examples on valuation algebras can be found in more details in \cite{information},\cite{semiring}.
In the following section, we investigate the relationship between isomorphisms of semirings and isomorphisms of their induced valuation algebras. 

\section{Semiring Isomorphism Induces Valuation Algebra Isomorphism}
\label{sec:2}

We first establish the easier direction: a semiring isomorphism induces an isomorphism between the corresponding semiring-induced valuation algebras. 

\begin{definition}\label{definition:4}
	{\rm \textsuperscript{\cite{Semiring}}}
	{ \rm A mapping $f$ from the semiring $A$ to the semiring $B$ is said to be a semiring homomorphism if for all $a, b \in A$, satisfying with 
		\begin{center}
			$f(a+b)= f(a)+ f(b),$\\ 
			$\hspace{0.1cm}f(a\times b)= f(a) \times f(b)$,\\
			$\hspace{-0.7cm}f(0)=0, f(1)=1$.
		\end{center}
		
		If $f$ is bijective, then $f$ is called a semiring isomorphism. 
}
\end{definition}

\begin{definition}\label{definition:5}
{\rm \textsuperscript{\cite{information}}}
{ \rm Let $(\Phi,D), (\Psi,D)$ be two valuation algebras, a mapping $h:(\Phi,D)\rightarrow (\Psi,D)$ is called a homomorphism, if, for $\phi_1,\phi_2, \phi \in \Phi,$
	
	1. $h(\phi_1\otimes \phi_2)=h(\phi_1)\otimes h(\phi_2)$;
	
	2. $h(\phi^{\downarrow x})=h(\phi)^{\downarrow x}$, if $ x\subseteq d(\phi)$;
	
	3. $h(e_x)=e_x$, for all $x\in D$.
	
	If $h$ is bijective, then $h$ is called an isomorphism. 
}
\end{definition}

\begin{theorem}\label{theorem:2}
{\rm Let $(\Phi, D), (\Psi,D)$ be two valuation algebras induced by two semirings $A$ and $B$ respectively. If $f:A\rightarrow B$ is a semiring isomorphism, then $(\Phi,D)$ is isomorphic to $(\Psi,D)$.}
\end{theorem}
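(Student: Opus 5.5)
We define $h:\Phi\rightarrow\Psi$ by post-composition with $f$. For $\phi\in\Phi$ with $d(\phi)=s$, let $h(\phi)$ be the valuation with the same domain $s$ given by $h(\phi)({\bf x})=f(\phi({\bf x}))$ for ${\bf x}\in\Omega_s$, i.e.\ $h(\phi)=f\circ\phi$. The plan is to verify the three conditions of Definition~\ref{definition:5} one at a time, and then show bijectivity by building the inverse from $f^{-1}$. Both algebras use the same variable set $r$, the same frames and the same lattice $D={\cal P}(r)$. So a valuation of either algebra is simply a function $\Omega_s\rightarrow A$ or $\Omega_s\rightarrow B$, and $h$ keeps domains fixed.

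\textbf{Combination.} For $\phi_1,\phi_2$ with domains $s_1,s_2$ and ${\bf x}\in\Omega_{s_1\cup s_2}$, multiplicativity of $f$ gives
$h(\phi_1\otimes\phi_2)({\bf x})=f(\phi_1({\bf x}^{\downarrow s_1})\times\phi_2({\bf x}^{\downarrow s_2}))=f(\phi_1({\bf x}^{\downarrow s_1}))\times f(\phi_2({\bf x}^{\downarrow s_2}))=(h(\phi_1)\otimes h(\phi_2))({\bf x})$.
The domains agree by the Labeling axiom.

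\textbf{Marginalization.} For $t\subseteq s=d(\phi)$ and ${\bf x}\in\Omega_t$, the sum $\sum_{{\bf z}^{\downarrow t}={\bf x}}\phi({\bf z})$ is finite because frames are finite. Additivity of $f$, extended to finite sums by induction, then gives $h(\phi^{\downarrow t})=h(\phi)^{\downarrow t}$. If the index set could be empty, the empty sum is $0$, and $f(0)=0$ covers it. This cannot actually happen, since every configuration of $t$ extends to one of $s$.

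\textbf{Neutral elements.} Here we must identify $e_x$ in a semiring-induced algebra. It is the constant valuation $e_x({\bf x})=1$ on $\Omega_x$, since $1\times\phi({\bf x})=\phi({\bf x})$. Note that $e_x$ is the unique neutral element of $\Phi_x$: if $e,e'$ are both neutral then $e=e\otimes e'=e'$. Hence $h(e_x)=f\circ e_x$ is the constant $f(1)=1$ on $\Omega_x$, which is $e_x$ in $\Psi$. Because the paper's definition only names "a neutral element", I would include this uniqueness remark explicitly so that condition 3 is unambiguous.

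\textbf{Bijectivity.} Define $g:\Psi\rightarrow\Phi$ by $g(\psi)=f^{-1}\circ\psi$, with the domain unchanged. Then $g\circ h$ and $h\circ g$ are the identities, because $f^{-1}\circ f$ and $f\circ f^{-1}$ are identities pointwise on values. So $h$ is a bijection, and hence an isomorphism of valuation algebras.

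None of these steps is a real obstacle. The only points needing care are the identification and uniqueness of the neutral elements, and extending additivity of $f$ from binary to finite sums.
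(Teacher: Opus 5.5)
Your proposal is correct and follows essentially the same route as the paper: define $h(\phi)=f\circ\phi$, verify combination, marginalization and the neutral elements from the homomorphism properties of $f$, and obtain bijectivity from $f^{-1}$. The paper checks injectivity and surjectivity separately instead of exhibiting the inverse $g(\psi)=f^{-1}\circ\psi$, and your remarks on the uniqueness of $e_x$ and on extending additivity to finite sums are sound details the paper leaves implicit.
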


\begin{proof}
For $\phi\in \Phi$, let $h(\phi)=f\circ \phi $. Then $h(\phi)\in\Psi$. We show that $h: \Phi \rightarrow \Psi$ is a homomorphism.

1. Let $\phi ,\psi \in \Phi $ with $d(\phi )=s, d(\psi )= t$.
Since $f$ is a semiring homomorphism, for all ${\bf x} \in \Omega _{s\cup t}$, we have
\begin{center}
	$h(\phi\otimes \psi)({\bf x})
	=f((\phi\otimes \psi) ({\bf x}))$\\
	$\hspace{2.6cm}= f(\phi ({\bf x}^{\downarrow s} ) \times \psi ({\bf x}^{ \downarrow t} ))$\\
	$\hspace{3.1cm}= f(\phi({\bf x}^{ \downarrow s} ))\times f(\psi ({\bf x}^{ \downarrow t} ))$\\
	$\hspace{3.1cm}=h(\phi)({\bf x}^{ \downarrow s})\times h(\psi)({\bf x}^{\downarrow t})$\\
	$\hspace{2.4cm}=(h(\phi) \otimes h(\psi))({\bf x})$. 
\end{center}
So $h(\phi \otimes\psi)= h(\phi) \otimes h(\psi)$.

2. Let $\phi \in \Phi $ with $d(\phi)= s$. If $t \subseteq s$, for all ${\bf x} \in \Omega _t$,
we have 
\begin{center}
	${h(\phi)}^{ \downarrow t} ( {\bf x})
	=\sum\limits_{ {\bf y} \in \Omega_{s-t}}{(f\circ \phi )( {\bf x}, {\bf y})}
	= f(\sum\limits_{ {\bf y} \in \Omega_{s-t} } {\phi ( {\bf x}, {\bf y})} )
	= f(\phi^{\downarrow t} ({\bf x}))=h(\phi^{\downarrow t})( {\bf x})$. 
\end{center}
Then ${h(\phi)}^{\downarrow t} = h(\phi^{\downarrow t})$.

3. In a semiring-induced valuation algebra, the neutral element $e_s$ of semigroup $\Phi_s$ is defined by $e_s({\bf x})=1$
for all ${\bf x}\in \Omega_s$. Because $f$ is a semiring homomorphism, we have $$h(e_s)({\bf x})=f(e_s({\bf x}))=f(1)=1, \forall {\bf x}\in \Omega_s.$$
Then $h(e_s)=e_s$.

We have shown that $h$ is a homomorphism. Next, it suffices to prove that $h$ is bijective.

If $h(\phi_1)= h(\phi_2)$, then 
$$d(\phi_1)=d(f\circ \phi_1)=d(h(\phi_1))=d(h(\phi_2))=d(f\circ \phi_2)=d(\phi_2).$$
Suppose that $d(\phi_1)=d(\phi_2)=s$. 
For all ${\bf x}\in \Omega_s$,
we have 
$$f(\phi_1({\bf x}))=(f\circ \phi_1)({\bf x})=
(f\circ \phi_2)({\bf x})=f(\phi_2({\bf x})).$$
Since $f$ is a bijection, it follows that $\phi_1({\bf x})=\phi_2({\bf x})$. The arbitrariness of ${\bf x}$ implies that $\phi_1=\phi_2$. Thus $h$ is injective.

For all $\psi: \Omega_s\rightarrow B$, we define a valuation $\phi: \Omega_s\rightarrow A$ by
$$\phi({\bf x})=f^{-1}(\psi({\bf x})).$$ 
Then $h(\phi)=\psi$. It means that $h$ is surjective. Therefore, $h$ is an isomorphism.
\end{proof}

\section{From Valuation Algebra Isomorphism to Semiring Isomorphism}

Suppose that $(\Phi,D)$ and $(\Psi,D)$ are two isomorphic valuation algebras induced by $A$ and $B$ respectively, with isomorphism $h$. Our goal is to extract a semiring isomorphism $f: A \rightarrow B$. 
In what follows, we assume that the semirings $A$ and $B$ are additively idempotent. It is well known that such semiring-induced valuation algebras satisfy the ${\it Stability}$ axiom(see \cite{semiring}). 

\begin{lemma}\label{lemma:16}
{\rm \textsuperscript{\cite{information}}
	If $h$ is a homomorphism between two valuation algebras $(\Phi,D)$ and $(\Psi,D)$ with the Stability, 
	then $h$ maintains the domain of a valuation, i.e., $$d(h(\phi))=d(\phi),\forall \phi\in \Phi.$$}
	\end{lemma}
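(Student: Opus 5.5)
The plan is to characterize the domain of a valuation purely through the algebraic operations that $h$ preserves, namely combination and the neutral elements $e_x$, so that preservation of these forces preservation of domains. The key identity is that for any valuation $\phi$ and any $x\in D$, the Labeling axiom gives $d(\phi\otimes e_x)=d(\phi)\cup x$. Combined with the Semigroup and Neutrality axioms, $\phi\otimes e_{d(\phi)}=\phi$. So one candidate characterization is: $d(\phi)$ is the smallest $x\in D$ such that $\phi\otimes e_x=\phi$. Indeed, if $\phi\otimes e_x=\phi$ then $d(\phi)=d(\phi)\cup x$, so $x\subseteq d(\phi)$; conversely, if $x\subseteq d(\phi)$ then
\[
\phi\otimes e_x=\phi\otimes e_{d(\phi)}\otimes e_x=\phi\otimes e_{d(\phi)\cup x}=\phi\otimes e_{d(\phi)}=\phi .
\]
Hence $\{x: \phi\otimes e_x=\phi\}=\mathcal P(d(\phi))$, and its largest element is $d(\phi)$.

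Next I would transport this through $h$. Using properties 1 and 3 of Definition~\ref{definition:5}, $\phi\otimes e_x=\phi$ implies $h(\phi)\otimes e_x=h(\phi)$. Taking $x=d(\phi)$ gives $d(\phi)\subseteq d(h(\phi))$ by the previous paragraph applied in $\Psi$. This yields one inclusion without any Stability.

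For the reverse inclusion I would use Stability and marginalization. Put $s=d(\phi)$ and $t=d(h(\phi))\supseteq s$. Since $s\subseteq d(\phi)$, property 2 gives $h(\phi)=h(\phi^{\downarrow s})=h(\phi)^{\downarrow s}$, whose domain is $s$ by the Marginalization axiom. Hence $t=s$. This step uses only that $\phi^{\downarrow d(\phi)}=\phi$. In semiring-induced algebras that identity holds by the definition of marginalization, because the sum ranges over the single configuration $\mathbf z=\mathbf x$. In the abstract setting I would instead derive it from the Combination axiom together with Stability:
\[
\phi=(\phi\otimes e_s)^{\downarrow s}=\phi\otimes e_s^{\downarrow s}=\phi\otimes e_s=\phi .
\]
This leaves the question of obtaining $\phi^{\downarrow s}=\phi$ itself. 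Writing $\phi=\phi\otimes e_\emptyset$ and applying Combination with $x=s$, $y=\emptyset$ gives
\[
\phi^{\downarrow s}=(\phi\otimes e_\emptyset)^{\downarrow s}=\phi\otimes e_\emptyset^{\downarrow\emptyset}=\phi\otimes e_\emptyset=\phi ,
\]
where $e_\emptyset^{\downarrow \emptyset}=e_\emptyset$ is exactly the Stability axiom.

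The main obstacle is this last point: justifying $\phi^{\downarrow d(\phi)}=\phi$ in general, which is where Stability enters. The derivation above via $e_\emptyset$ should settle it. In the paper's semiring-induced setting the identity is immediate anyway.
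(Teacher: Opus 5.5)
Your proof is correct. The paper does not prove this lemma itself; it only cites Kohlas. Your argument is the standard one: combining with $e_{d(\phi)}$ and using $h(e_x)=e_x$ gives $d(\phi)\subseteq d(h(\phi))$. Then $\phi=\phi^{\downarrow d(\phi)}$, together with preservation of marginalization and the Marginalization axiom in $(\Psi,D)$, forces equality.

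Your write-up adds two things the bare citation hides:
\begin{itemize}
\item You establish $d(\phi)\subseteq d(h(\phi))$ before writing $h(\phi)^{\downarrow d(\phi)}$, so that this projection is actually defined.
\item You make explicit where Stability enters. Definition~\ref{definition:1} has no axiom $\phi^{\downarrow d(\phi)}=\phi$, so that identity is exactly where Stability is used. As written, your argument needs it only in $(\Phi,D)$, and only the single instance $e_\emptyset^{\downarrow\emptyset}=e_\emptyset$.
\end{itemize}

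Two small slips to fix:
\begin{itemize}
\item At the start you call $d(\phi)$ the \emph{smallest} $x$ with $\phi\otimes e_x=\phi$. As you then show, that set is $\mathcal P(d(\phi))$, so $d(\phi)$ is its largest element.
\item Your first display should begin with $\phi^{\downarrow s}$, not $\phi$. Written as $\phi^{\downarrow s}=(\phi\otimes e_s)^{\downarrow s}=\phi\otimes e_s^{\downarrow s}=\phi\otimes e_s=\phi$, it already proves the identity via the Stability instance $e_s^{\downarrow s}=e_s$. So your remark that it ``leaves the question'' open is mistaken, and the second derivation through $e_\emptyset$, though also correct, is redundant.
\end{itemize}
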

	
	We begin by defining $f$ through the action of $h$ on valuations with empty domain. Since homomorphisms between stable valuation algebras maintain domains of valuations, we may define a mapping $f: A \longrightarrow B$ by: 
	$$\forall a\in A, f(a)=h(\zeta^{(a)})(\epsilon),$$ 
	where the valuation $\zeta^{(a)}: \Omega_{\emptyset} \longrightarrow A$ is given by $\zeta^{(a)}(\epsilon)=a$.
	
	\subsection{Multiplication Preservation via Valuation Algebra Isomorphisms}
	In this subsection, for $f: A \rightarrow B$ between the underlying semirings, we establish its basic algebraic properties -- namely bijectivity, preservation of the zero and unit elements, and preservation of multiplication.
	
	\begin{lemma}\label{lemma:6}
{\rm	The mapping $f$ is a bijection.}
\end{lemma}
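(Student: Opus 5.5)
The map $a\mapsto\zeta^{(a)}$ is a bijection between $A$ and the set $\Phi_\emptyset$ of valuations with empty domain, since $\Omega_\emptyset=\{\epsilon\}$. Likewise, $b\mapsto\xi^{(b)}$, where $\xi^{(b)}(\epsilon)=b$, is a bijection between $B$ and $\Psi_\emptyset$. Evaluation at $\epsilon$ inverts each of these maps. So $f$ is the composite
$$A\longrightarrow\Phi_\emptyset\xrightarrow{\;h|_{\Phi_\emptyset}\;}\Psi_\emptyset\longrightarrow B,$$
and it suffices to show that $h$ restricts to a bijection $\Phi_\emptyset\to\Psi_\emptyset$.

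The first step is to observe that this restriction is well defined. Both algebras satisfy Stability because $A$ and $B$ are additively idempotent. Lemma~\ref{lemma:16} then gives $d(h(\phi))=d(\phi)$, so $h$ maps $\Phi_\emptyset$ into $\Psi_\emptyset$.

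For injectivity, suppose $f(a)=f(a')$. Then $h(\zeta^{(a)})$ and $h(\zeta^{(a')})$ are valuations on $\Omega_\emptyset$ that agree at $\epsilon$, so they are equal. Since $h$ is injective, $\zeta^{(a)}=\zeta^{(a')}$, and evaluating at $\epsilon$ gives $a=a'$.

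For surjectivity, take $b\in B$ and consider $\xi^{(b)}\in\Psi_\emptyset$. Since $h$ is surjective, there is some $\phi\in\Phi$ with $h(\phi)=\xi^{(b)}$. The main point is to check that $\phi$ lies in $\Phi_\emptyset$. Lemma~\ref{lemma:16} gives $d(\phi)=d(h(\phi))=d(\xi^{(b)})=\emptyset$. Hence $\phi=\zeta^{(a)}$ with $a=\phi(\epsilon)$, and $f(a)=\xi^{(b)}(\epsilon)=b$.

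The only real obstacle is this domain-preservation step in the surjectivity argument. Without it, a preimage of an empty-domain valuation might have a larger domain. Lemma~\ref{lemma:16} rules this out, given Stability from additive idempotency, so the remaining argument is routine bookkeeping.
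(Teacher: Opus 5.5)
Your proof is correct and follows the same argument as the paper. Injectivity comes from $\Omega_\emptyset$ being a singleton together with injectivity of $h$, and surjectivity from pulling back the empty-domain valuation with value $b$ along $h$; the paper simply asserts that this preimage lies in $\Phi_\emptyset$, whereas you justify that step via Lemma~\ref{lemma:16}.
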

\begin{proof}
Let $a_1, a_2\in A$ and suppose that 
$f(a_1) = f(a_2)$, i.e., $h(\zeta^{(a_1)})(\epsilon) = h(\zeta^{(a_2)})(\epsilon)$.  
Since $h(\zeta^{(a_1)})$ and $h(\zeta^{(a_2)})$ are valuations in $\Psi_\emptyset$ and $\Omega_\emptyset$ is a singleton, it follows that
$h(\zeta^{(a_1)}) = h(\zeta^{(a_2)})$.  
As $h$ is injective, we obtain $\zeta^{(a_1)} = \zeta^{(a_2)}$, which implies $a_1 = a_2$. Thus $f$ is injective.

Since $h$ is surjective, for any $b \in B$, 
there exists $\zeta^{(a)}\in \Phi_\emptyset$ such that $h(\zeta^{(a)}) = \zeta^{(b)}$.  
It follows that $f(a) = h(\zeta^{(a)})(\epsilon)=\zeta^{(b)}(\epsilon) = b$. Hence,  $f$ is surjective.
\end{proof}

\begin{lemma}\label{lemma:13}
{\rm The mapping $f$ preserves the zero element and the unit element, i.e., $f(0)= 0$ and $f(1)= 1.$}
\end{lemma}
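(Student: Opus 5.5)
The plan is to handle the unit and the zero separately. In both cases I would identify the relevant element of $A$ with a valuation on the empty domain that $h$ is forced to preserve. Throughout, I will use that on $\Omega_\emptyset=\{\epsilon\}$ combination reduces to multiplication in the semiring:
$$\zeta^{(a)}\otimes\zeta^{(b)}=\zeta^{(a\times b)},$$
since $\epsilon^{\downarrow\emptyset}=\epsilon$.

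\emph{Unit.} The neutral element $e_\emptyset$ of $\Phi_\emptyset$ is the valuation with $e_\emptyset(\epsilon)=1$, that is, $e_\emptyset=\zeta^{(1)}$. The same holds in $\Psi$. By the third condition in Definition~\ref{definition:5}, $h(e_\emptyset)=e_\emptyset$. Hence
$$f(1)=h(\zeta^{(1)})(\epsilon)=e_\emptyset(\epsilon)=1.$$

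\emph{Zero.} Here I would characterize $\zeta^{(0)}$ purely in terms of combination. It is the unique absorbing element of $\Phi_\emptyset$: $\zeta^{(0)}\otimes\phi=\zeta^{(0)}$ for every $\phi\in\Phi_\emptyset$, because $0\times a=0$. Uniqueness is the usual argument: if $z,z'$ are both absorbing, then $z=z\otimes z'=z'$.

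Next I check that $h$ restricts to a bijection $\Phi_\emptyset\to\Psi_\emptyset$. By Lemma~\ref{lemma:16} (the algebras are stable, since $A$ and $B$ are additively idempotent), $h$ maps $\Phi_\emptyset$ into $\Psi_\emptyset$. Conversely, any $\psi\in\Psi_\emptyset$ has a preimage $\phi$ under the surjective map $h$, and $d(\phi)=d(h(\phi))=\emptyset$.

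Now take any $\psi\in\Psi_\emptyset$ and write $\psi=h(\phi)$ with $\phi\in\Phi_\emptyset$. Then
$$h(\zeta^{(0)})\otimes\psi=h(\zeta^{(0)}\otimes\phi)=h(\zeta^{(0)}).$$
So $h(\zeta^{(0)})$ is absorbing in $\Psi_\emptyset$. By uniqueness it equals the zero valuation of $\Psi_\emptyset$, and therefore $f(0)=h(\zeta^{(0)})(\epsilon)=0$.

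The only delicate step is making sure that surjectivity of $h$ onto $\Psi_\emptyset$ is realized by elements of $\Phi_\emptyset$. This is exactly where domain preservation (Lemma~\ref{lemma:16}) and the Stability axiom are needed. The remaining steps are direct computations.
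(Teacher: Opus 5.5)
Your proof is correct and is essentially the paper's argument. The unit case is identical. Your absorbing-element argument in $\Psi_\emptyset$ is the paper's computation $f(0)\cdot b=f(0)$ for all $b\in B$ followed by $f(0)=f(0)\cdot 0=0$, phrased for valuations rather than semiring elements; your check via Lemma~\ref{lemma:16} that $h$ maps $\Phi_\emptyset$ onto $\Psi_\emptyset$ replaces the paper's appeal to surjectivity of $f$ (Lemma~\ref{lemma:6}).
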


\begin{proof}
Since $ f $ is surjective, for any $ b \in B $ there exists $ a \in A $ such that $ f(a) = b $.
Then 
\begin{center}
	$f(0) \cdot b=f(0) \cdot f(a) $\\
	$\hspace{2.6cm}= h(\zeta^{(0)})(\epsilon) \cdot h(\zeta^{(a)})(\epsilon)$\\
	$\hspace{2.8cm}=\left[h(\zeta^{(0)}) \otimes h(\zeta^{(a)})\right](\epsilon)$\\
	$\hspace{2cm}=h(\zeta^{(0)} \otimes \zeta^{(a)})(\epsilon)$\\
	$\hspace{1.1cm}=h(\zeta^{(0)})(\epsilon)$\\
	$\hspace{0.5cm}=f(0) .$
\end{center}
In particular, taking $ b = 0$(in the semiring $B$), we get $ f(0) = f(0) \cdot 0=0$.

Since the valuation homomorphism $h$ maps the neutral element $\zeta^{(1)}$ of $\Phi_\emptyset$ to the neutral element $\zeta^{(1)}$ of $\Psi_\emptyset$, then  

{\centering \hspace{4cm}
	$f(1) = h(\zeta^{(1)})(\epsilon) 
	= \zeta^{(1)}(\epsilon) = 1.$}
	\end{proof}
	
	\begin{lemma}\label{lemma:3}
{\rm The mapping $f$ preserves the multiplication operation.}
\end{lemma}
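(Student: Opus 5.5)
The plan is to mirror the computation already used in Lemma~\ref{lemma:13}, exploiting that combination of two empty-domain valuations is simply multiplication of their single values. Fix $a_1,a_2\in A$. By the definition of combination in a semiring-induced valuation algebra, for ${\bf x}=\epsilon\in\Omega_{\emptyset}$ we have ${\bf x}^{\downarrow\emptyset}=\epsilon$, and therefore
$$(\zeta^{(a_1)}\otimes\zeta^{(a_2)})(\epsilon)=\zeta^{(a_1)}(\epsilon)\times\zeta^{(a_2)}(\epsilon)=a_1\times a_2 .$$
Since $d(\zeta^{(a_1)}\otimes\zeta^{(a_2)})=\emptyset\cup\emptyset=\emptyset$ and $\Omega_\emptyset$ is a singleton, this gives the identity of valuations $\zeta^{(a_1)}\otimes\zeta^{(a_2)}=\zeta^{(a_1\times a_2)}$.

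Next we apply the homomorphism property 1 of Definition~\ref{definition:5} and unfold the definition of $f$:
$$f(a_1\times a_2)=h(\zeta^{(a_1\times a_2)})(\epsilon)=h(\zeta^{(a_1)}\otimes\zeta^{(a_2)})(\epsilon)=\bigl[h(\zeta^{(a_1)})\otimes h(\zeta^{(a_2)})\bigr](\epsilon).$$
By Lemma~\ref{lemma:16}, which applies because additive idempotency gives Stability, both $h(\zeta^{(a_1)})$ and $h(\zeta^{(a_2)})$ have empty domain. So the combination in $\Psi$ is again evaluated at $\epsilon$ with trivial projections, and it equals $h(\zeta^{(a_1)})(\epsilon)\times h(\zeta^{(a_2)})(\epsilon)=f(a_1)\times f(a_2)$.

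The only point requiring care is the use of Lemma~\ref{lemma:16}. Without it, $h(\zeta^{(a)})$ might have a nonempty domain, and then evaluation at $\epsilon$ would not even be meaningful. Once domains are known to be preserved, the rest is a direct unwinding of definitions, so I do not expect a real obstacle here.
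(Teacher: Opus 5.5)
Your proposal is correct and follows the paper's proof essentially verbatim: you rewrite $\zeta^{(a_1\times a_2)}$ as $\zeta^{(a_1)}\otimes\zeta^{(a_2)}$, push $h$ through the combination, and evaluate at $\epsilon$. Your explicit appeal to Lemma~\ref{lemma:16} to ensure the images have empty domain only spells out what the paper already uses implicitly when it defines $f$.
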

\begin{proof}
For all $a_1, a_2 \in A$, we have
\begin{center}
	$f(a_1 \cdot a_2) 
	= h(\zeta^{(a_1 \cdot a_2)})(\epsilon)$\\
	$\hspace{2.2cm}=h(\zeta^{(a_1)} \otimes \zeta^{(a_2)})(\epsilon)$\\
	$\hspace{2.8cm}=  [h(\zeta^{(a_1)}) \otimes h(\zeta^{(a_2)})](\epsilon)$\\
	$\hspace{2.8cm}=h(\zeta^{(a_1)})(\epsilon) \cdot h(\zeta^{(a_2)})(\epsilon)$\\
	$\hspace{1.5cm}=f(a_1) \cdot f(a_2).$
\end{center}
Then the mapping $f$ preserves the multiplication.
\end{proof}

Under the assumption that the semirings are additively idempotent, we prove that the mapping $f$ satisfies all conditions required for an isomorphism except preserving the addition operation.

\subsection{Properties of Elementary Valuations and Their Images}

To proceed further, we need to understand how the homomorphism $h: \Phi \rightarrow \Psi$ behaves on elementary valuations that are concentrated at a single configuration. These valuations play a fundamental role in a valuation algebra, similar to the basis elements in linear spaces.

Let $s\subseteq r$, $\mathbf{x} \in \Omega_s$ and $a \in A$. Define a valuation $\delta_{\mathbf{x}}^{(a)}: \Omega_s \to A$ by:
$$\delta_{\mathbf{x}}^{(a)}(\mathbf{x}') =\left\{
\begin {array}{ll}
a,&{\mbox{if}} \ \ \mathbf{x}' = \mathbf{x};\\
0,&{\mbox{otherwise}}.
\end{array}
\right.$$
We write $\delta_{\mathbf{x}}^{(1)}$ as $\delta_{\mathbf{x}}$, or $\delta_{\mathbf{x}}^A$ when it is necessary to distinguish the underlying semirings. There exists a connection between mappings of the form $\zeta^{(a)}$ and $\delta_{\mathbf{x}}^{(a) }$. We present some properties of these special valuations.

\begin{lemma}\label{lemma:1}
$\left[\delta_{\mathbf{x}}^{(a) }\right]^{\downarrow \emptyset} =\zeta^{(a)}$.
\end{lemma}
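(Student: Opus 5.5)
The plan is to compute the marginal directly from the definition of marginalization in a semiring-induced valuation algebra. Let $\mathbf{x}\in\Omega_s$, so that $d(\delta_{\mathbf{x}}^{(a)})=s$. By the Marginalization axiom the marginal $\left[\delta_{\mathbf{x}}^{(a)}\right]^{\downarrow \emptyset}$ has domain $\emptyset$. Its frame is therefore the singleton $\Omega_\emptyset=\{\epsilon\}$, which is also the frame of $\zeta^{(a)}$. Hence it suffices to show that the two valuations agree at $\epsilon$.

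For the computation, every configuration $\mathbf{z}\in\Omega_s$ satisfies $\mathbf{z}^{\downarrow\emptyset}=\epsilon$. The defining sum therefore runs over all of $\Omega_s$:
$$\left[\delta_{\mathbf{x}}^{(a)}\right]^{\downarrow \emptyset}(\epsilon)=\sum_{\mathbf{z}\in\Omega_s}\delta_{\mathbf{x}}^{(a)}(\mathbf{z}).$$
Since $\Omega_s$ is finite, I will split this sum into the single term $\mathbf{z}=\mathbf{x}$, which contributes $a$, and the remaining terms, each of which equals $0$. Because $0$ is the additive identity, the sum reduces to $a=\zeta^{(a)}(\epsilon)$. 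This argument needs no idempotency and holds in any semiring with zero.

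Two edge cases need a brief remark. If $s=\emptyset$, then $\mathbf{x}=\epsilon$ and $\delta_{\mathbf{x}}^{(a)}$ is literally $\zeta^{(a)}$; the marginal onto its own domain is the identity, by the same formula with a one-term sum. If $\Omega_s=\{\mathbf{x}\}$ happens to be a singleton, there are no zero terms and the conclusion is immediate.

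I expect no real obstacle here. The only point needing care is the convention that a sum with possibly empty or all-zero remaining part equals $0$, which follows from $0$ being the additive neutral element.
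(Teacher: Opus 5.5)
Your proposal is correct and is essentially the paper's proof: expand the marginal at $\epsilon$ as the sum over all of $\Omega_s$, take the term at $\mathbf{x}$, which contributes $a$, and note that every other term is $0$. Your remarks on the domain of the marginal and on the degenerate cases $s=\emptyset$ and $|\Omega_s|=1$ are harmless additions that the paper leaves implicit.
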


\begin{proof}
Since
\begin{center} 
$\left[\delta_{\mathbf{x}}^{(a) }\right]^{\downarrow \emptyset}(\epsilon) = \sum\limits_{\mathbf{x}' \in \Omega_s} \delta_{\mathbf{x}}^{(a)}(\mathbf{x}') = \delta_{\mathbf{x}}^{(a)}(\mathbf{x}) + \sum\limits_{\mathbf{x}' \neq \mathbf{x}} \delta_{\mathbf{x}}^{(a)}(\mathbf{x}') = a $,
\end{center}
then $\left[\delta_{\mathbf{x}}^{(a) }\right]^{\downarrow \emptyset} =\zeta^{(a)}$.
\end{proof}

\begin{lemma}\label{lemma:5}
$\zeta^{(a)}\otimes \delta_{\mathbf{x}}=\delta_{\mathbf{x}}^{(a)}$.
\end{lemma}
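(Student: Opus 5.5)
The plan is to compare the two valuations pointwise, using the definition of combination given in Section 2. Fix $s\subseteq r$ and $\mathbf{x}\in\Omega_s$. The valuation $\zeta^{(a)}$ has domain $\emptyset$ and $\delta_{\mathbf{x}}$ has domain $s$. By the Labeling axiom, $\zeta^{(a)}\otimes\delta_{\mathbf{x}}$ then has domain $\emptyset\cup s=s$, which is also the domain of $\delta_{\mathbf{x}}^{(a)}$. It therefore suffices to show that both functions agree at every $\mathbf{x}'\in\Omega_s$.

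For $\mathbf{x}'\in\Omega_s$, the definition of combination gives
$$\bigl(\zeta^{(a)}\otimes\delta_{\mathbf{x}}\bigr)(\mathbf{x}')=\zeta^{(a)}\bigl(\mathbf{x}'^{\downarrow\emptyset}\bigr)\times\delta_{\mathbf{x}}\bigl(\mathbf{x}'^{\downarrow s}\bigr)=\zeta^{(a)}(\epsilon)\times\delta_{\mathbf{x}}(\mathbf{x}')=a\times\delta_{\mathbf{x}}(\mathbf{x}').$$
Here we use $\mathbf{x}'^{\downarrow\emptyset}=\epsilon$ and $\mathbf{x}'^{\downarrow s}=\mathbf{x}'$. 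We then split into two cases.
\begin{itemize}
\item If $\mathbf{x}'=\mathbf{x}$, the value is $a\times 1=a$, because $1$ is the unit element.
\item If $\mathbf{x}'\neq\mathbf{x}$, the value is $a\times 0=0$, because $0$ is absorbing.
\end{itemize}
This is exactly the definition of $\delta_{\mathbf{x}}^{(a)}(\mathbf{x}')$.

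No real obstacle is expected. The only point needing care is the degenerate case $s=\emptyset$. Then $\Omega_s=\{\epsilon\}$, $\mathbf{x}=\epsilon$, and the only configuration is $\mathbf{x}'=\epsilon$. The computation gives $a\times 1=a$, so it still applies, and the statement reduces to $\zeta^{(a)}\otimes\zeta^{(1)}=\zeta^{(a)}$.
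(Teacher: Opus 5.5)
Your proposal is correct and follows essentially the same route as the paper: evaluate pointwise, $(\zeta^{(a)}\otimes\delta_{\mathbf{x}})(\mathbf{x}')=\zeta^{(a)}(\epsilon)\times\delta_{\mathbf{x}}(\mathbf{x}')=a\times\delta_{\mathbf{x}}(\mathbf{x}')$, then split into the cases $\mathbf{x}'=\mathbf{x}$ and $\mathbf{x}'\neq\mathbf{x}$. Your explicit domain check and your remark on the case $s=\emptyset$ are correct additions that the paper leaves implicit.
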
	
\begin{proof}
For $\mathbf{x}'\in \Omega_s$,
\begin{center}
$ [\zeta^{(a)}\otimes \delta_{\mathbf{x}}] (\mathbf{x}')
=  \zeta^{(a)}(\epsilon) \times \delta_{\mathbf{x}} (\mathbf{x}') 
=a \times \delta_{\mathbf{x}} (\mathbf{x}').$
\end{center}
If $ \mathbf{x}' = \mathbf{x} $, then 
$$[\zeta^{(a)}\otimes \delta_{\mathbf{x}}] (\mathbf{x}) = a = \delta_{\mathbf{x}}^{(a)}(\mathbf{x}).$$
If $ \mathbf{x}' \neq \mathbf{x} $, then 
$$[\zeta^{(a)}\otimes \delta_{\mathbf{x}}] (\mathbf{x}')
= 0= \delta_{\mathbf{x}}^{(a)}(\mathbf{x}').$$
This means 
$[\zeta^{(a)}\otimes \delta_{\mathbf{x}}] (\mathbf{x}')
=\delta_{\mathbf{x}}^{(a)}(\mathbf{x}')$ for all $\mathbf{x}'\in \Omega_s$.
Thus $\zeta^{(a)}\otimes \delta_{\mathbf{x}}=\delta_{\mathbf{x}}^{(a)}$.
\end{proof}

\begin{lemma}\label{lemma:8}
{\rm	Let \(h: (\Phi,D) \to (\Psi,D)\) be a valuation algebra isomorphism induced by semirings. Then \(h(\mathbf{0}_s) = \mathbf{0}_s\) holds for all \(s\subseteq r\), where \(\mathbf{0}_s\) denotes the valuation that constantly takes the value \(0\) with domain $s$.
}
\end{lemma}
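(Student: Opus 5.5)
The plan is to characterize $\mathbf{0}_s$ algebraically inside the valuation algebra, as an absorbing element of $\Phi_s$ under combination, and then use that $h$ is a bijective homomorphism preserving domains (Lemma~\ref{lemma:16}, since additive idempotence gives Stability).

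First observe that $\mathbf{0}_s = \zeta^{(0)}\otimes e_s$. Indeed, for every $\mathbf{x}\in\Omega_s$ we have $(\zeta^{(0)}\otimes e_s)(\mathbf{x}) = 0\times 1 = 0$. Next, by Lemma~\ref{lemma:13}, $f(0)=0$. Since $h(\zeta^{(0)})$ has empty domain (Lemma~\ref{lemma:16}) and $\Omega_\emptyset$ is a singleton, this means $h(\zeta^{(0)}) = \zeta^{(0)}$ in $\Psi_\emptyset$. Then by the combination-preservation property and $h(e_s)=e_s$ from Definition~\ref{definition:5}, we get
$$h(\mathbf{0}_s) = h(\zeta^{(0)}\otimes e_s) = h(\zeta^{(0)})\otimes h(e_s) = \zeta^{(0)}\otimes e_s = \mathbf{0}_s,$$
where the last equality is the same pointwise computation carried out in $B$.

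An alternative, in case one prefers not to route through $f$, is the absorbing characterization. $\mathbf{0}_s$ is the unique $\phi\in\Phi_s$ with $\phi\otimes\psi=\phi$ for all $\psi\in\Phi_s$. Since $h$ is surjective and preserves domains, $h(\mathbf{0}_s)$ absorbs every element of $\Psi_s$, so it equals the unique such element $\mathbf{0}_s$ of $\Psi_s$. Uniqueness holds because two absorbing elements $\phi,\phi'$ satisfy $\phi=\phi\otimes\phi'=\phi'$. The case $s=\emptyset$ is covered either way, and it is exactly the identity $h(\zeta^{(0)})=\zeta^{(0)}$.

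The only delicate point is ensuring $h(\zeta^{(0)})$ really is $\zeta^{(0)}$ as a valuation, which needs domain preservation. This is why Lemma~\ref{lemma:16}, and hence the Stability axiom from additive idempotence, must be invoked explicitly.
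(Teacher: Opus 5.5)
Your proof is correct, and your main route is different from the paper's. The paper argues directly in $\Phi_s$. Since $\phi\otimes\mathbf{0}_s=\mathbf{0}_s$ for every $\phi\in\Phi_s$, one gets $h(\mathbf{0}_s)=h(\phi)\otimes h(\mathbf{0}_s)$. Choosing $\phi$ with $h(\phi)=\mathbf{0}_s$ (by surjectivity and label preservation) then gives $h(\mathbf{0}_s)=\mathbf{0}_s\otimes h(\mathbf{0}_s)=\mathbf{0}_s$. That is exactly your ``alternative'' absorbing-element argument.

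Your main route instead factors $\mathbf{0}_s=\zeta^{(0)}\otimes e_s$. It then transports the scalar fact $h(\zeta^{(0)})=\zeta^{(0)}$ (that is, $f(0)=0$ from Lemma~\ref{lemma:13}) to every domain using $h(e_s)=e_s$. There is no circularity, because Lemma~\ref{lemma:13} comes earlier and does not use this lemma.

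The proof of Lemma~\ref{lemma:13} is itself the absorbing-element argument carried out in $\Phi_\emptyset$. So your route performs that argument once at the empty domain and lifts it to all $s$ through neutral elements. This makes explicit that surjectivity is only needed on $\Phi_\emptyset$. The paper's route is self-contained, does not depend on $f$, and works uniformly in each $\Phi_s$.

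One small correction to your closing remark: domain preservation does not actually need Stability in this setting.
\begin{itemize}
\item From $h(\phi)=h(\phi\otimes e_{d(\phi)})=h(\phi)\otimes e_{d(\phi)}$ you get $d(\phi)\subseteq d(h(\phi))$.
\item Then $h(\phi)=h(\phi^{\downarrow d(\phi)})=h(\phi)^{\downarrow d(\phi)}$, which has domain $d(\phi)$.
\end{itemize}
Citing Lemma~\ref{lemma:16} under the paper's standing idempotence assumption is still valid.
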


\begin{proof}
The zero valuation \(\mathbf{0}_s\) satisfies that for any \(\phi \in \Phi_s\),
\[
\phi \otimes \mathbf{0}_s = \mathbf{0}_s.
\]
It then follows that
$$
h(\mathbf{0}_s)= h(\phi \otimes \mathbf{0}_s) =h(\phi) \otimes h(\mathbf{0}_s) .
$$
Since \(h\) is surjective and label-preserving, there exists a valuation \(\phi \in \Phi_s\) such that \(h(\phi)=\mathbf{0}_s\). 
This implies that 

{\centering \hspace{5cm}
$
h(\mathbf{0}_s)=\mathbf{0}_s \otimes h(\mathbf{0}_s)=\mathbf{0}_s.
$}
\end{proof}

\begin{lemma}\label{lemma:7}
{\rm Let \(h: (\Phi,D) \to (\Psi,D)\) be a valuation algebra isomorphism induced by semirings. If $\mathbf{x},\mathbf{y}\in \Omega_s$ and $\mathbf{x}\neq \mathbf{y}$, 
then $h(\delta_{\mathbf{x}})(\mathbf{z})\times h(\delta_{\mathbf{y}})(\mathbf{z})=0$ for all $\mathbf{z}\in \Omega_s$.}
\end{lemma}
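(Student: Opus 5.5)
The idea is to pull the statement back to $\Phi$, where it is an easy computation, and then transport it through $h$ using that $h$ preserves combination and zero valuations. Fix $\mathbf{x}\neq\mathbf{y}$ in $\Omega_s$.

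First compute $\delta_{\mathbf{x}}\otimes\delta_{\mathbf{y}}$ in $\Phi$. Both factors have domain $s$, so for every $\mathbf{z}\in\Omega_s$
\[
(\delta_{\mathbf{x}}\otimes\delta_{\mathbf{y}})(\mathbf{z})=\delta_{\mathbf{x}}(\mathbf{z})\times\delta_{\mathbf{y}}(\mathbf{z}).
\]
Since $\mathbf{x}\neq\mathbf{y}$, at each $\mathbf{z}$ at least one factor equals $0$, so the product is $0$ because $0$ is absorbing. Hence $\delta_{\mathbf{x}}\otimes\delta_{\mathbf{y}}=\mathbf{0}_s$.

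Next apply $h$. Combination preservation gives
\[
h(\delta_{\mathbf{x}})\otimes h(\delta_{\mathbf{y}})=h(\delta_{\mathbf{x}}\otimes\delta_{\mathbf{y}})=h(\mathbf{0}_s).
\]
Lemma~\ref{lemma:8} then yields $h(\mathbf{0}_s)=\mathbf{0}_s$. Both algebras satisfy Stability because the semirings are additively idempotent, so Lemma~\ref{lemma:16} shows $d(h(\delta_{\mathbf{x}}))=d(h(\delta_{\mathbf{y}}))=s$. The combination on the left therefore has domain $s$ and is evaluated pointwise without any projection.

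Finally, evaluate both sides at an arbitrary $\mathbf{z}\in\Omega_s$. This gives $h(\delta_{\mathbf{x}})(\mathbf{z})\times h(\delta_{\mathbf{y}})(\mathbf{z})=\mathbf{0}_s(\mathbf{z})=0$, as claimed.

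There is no real obstacle. The only point needing care is the domain bookkeeping, i.e.\ invoking Lemma~\ref{lemma:16} so that the combination in $\Psi$ is a pointwise product on $\Omega_s$. This bookkeeping relies on the standing additive-idempotence assumption for the Stability axiom.
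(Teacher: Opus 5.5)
Your proof is correct and is essentially the paper's argument, which is the one-line chain $h(\delta_{\mathbf{x}})(\mathbf{z})\times h(\delta_{\mathbf{y}})(\mathbf{z})=[h(\delta_{\mathbf{x}})\otimes h(\delta_{\mathbf{y}})](\mathbf{z})=h(\delta_{\mathbf{x}}\otimes\delta_{\mathbf{y}})(\mathbf{z})=h(\mathbf{0}_s)(\mathbf{z})=0$ via Lemma~\ref{lemma:8}. Your explicit appeal to Lemma~\ref{lemma:16} for the domain bookkeeping spells out what the paper uses implicitly, both here and in the proof of Lemma~\ref{lemma:8}.
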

\begin{proof} By Lemma \ref{lemma:8}, we have
\[
h(\delta_{\mathbf{x}})(\mathbf{z})\times h(\delta_{\mathbf{y}})(\mathbf{z})=[h(\delta_{\mathbf{x}})\otimes h(\delta_{\mathbf{y}})](\mathbf{z})=h(\delta_{\mathbf{x}}\otimes \delta_{\mathbf{y}})(\mathbf{z})=h({\mathbf{0}_s})(\mathbf{z})=0.
\]
\end{proof}

\begin{proposition}\label{pro:3}
{\rm Let \(h: (\Phi,D) \to (\Psi,D)\) be a valuation algebra isomorphism induced by additive idempotent semirings. $B$ is a semiring with no multiplicative idempotent zero divisors. If $ s\subseteq r$ and $ \mathbf{x} \in \Omega_s $, then there exists a unique $ {\bf y} \in \Omega_s $ such that
$ h(\delta_\mathbf{x})({\bf y}) = 1_B $ 
and $\forall \mathbf{z}\in \Omega_s, \mathbf{z}\neq {\bf y}, \ h(\delta_\mathbf{x})(\mathbf{z}) = 0_B. $}
\end{proposition}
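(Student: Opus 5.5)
Write $\psi_{\mathbf{x}} := h(\delta_{\mathbf{x}})$ for $\mathbf{x}\in\Omega_s$. By Lemma \ref{lemma:16}, $d(\psi_{\mathbf{x}})=s$. The plan is to show that the supports $S_{\mathbf{x}}:=\{\mathbf{z}\in\Omega_s : \psi_{\mathbf{x}}(\mathbf{z})\neq 0_B\}$ are nonempty and pairwise disjoint. A counting argument will then force each support to be a singleton, and a marginalization argument will pin the unique nonzero value to $1_B$.

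\emph{Step 1 (idempotent values).} Since $1\times 1=1$ and $0\times 0=0$, we have $\delta_{\mathbf{x}}\otimes\delta_{\mathbf{x}}=\delta_{\mathbf{x}}$. Applying $h$ gives $\psi_{\mathbf{x}}\otimes\psi_{\mathbf{x}}=\psi_{\mathbf{x}}$, so every value $\psi_{\mathbf{x}}(\mathbf{z})$ is multiplicatively idempotent in $B$.

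\emph{Step 2 (marginal equals $1_B$).} By Lemma \ref{lemma:1}, $\delta_{\mathbf{x}}^{\downarrow\emptyset}=\zeta^{(1)}$. Since $h$ commutes with projection and fixes the neutral element $\zeta^{(1)}=e_\emptyset$, we get
\[
\sum_{\mathbf{z}\in\Omega_s}\psi_{\mathbf{x}}(\mathbf{z})=\psi_{\mathbf{x}}^{\downarrow\emptyset}(\epsilon)=h(\zeta^{(1)})(\epsilon)=1_B .
\]
Because $1_B\neq 0_B$ (otherwise $B$ is trivial and the statement is vacuous), $\psi_{\mathbf{x}}$ is not identically zero. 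Hence $S_{\mathbf{x}}\neq\emptyset$.

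\emph{Step 3 (disjointness, the key step).} Let $\mathbf{x}\neq\mathbf{x}'$ and suppose some $\mathbf{z}$ lies in $S_{\mathbf{x}}\cap S_{\mathbf{x}'}$. Lemma \ref{lemma:7} gives $\psi_{\mathbf{x}}(\mathbf{z})\times\psi_{\mathbf{x}'}(\mathbf{z})=0_B$ with both factors nonzero. By Step 1 these factors are idempotent, so they are multiplicatively idempotent zero divisors, which the hypothesis on $B$ excludes. Hence the sets $S_{\mathbf{x}}$, $\mathbf{x}\in\Omega_s$, are pairwise disjoint. I expect this to be the main obstacle: it is exactly where the hypothesis on $B$ enters, and one must check that Lemma \ref{lemma:7} together with Step 1 produces precisely a forbidden configuration.

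\emph{Step 4 (counting).} We now have $|\Omega_s|$ pairwise disjoint nonempty subsets of the finite set $\Omega_s$. Therefore each $S_{\mathbf{x}}$ is a singleton $\{\mathbf{y}\}$. Then $\psi_{\mathbf{x}}(\mathbf{z})=0_B$ for all $\mathbf{z}\neq\mathbf{y}$, and the sum in Step 2 reduces to $\psi_{\mathbf{x}}(\mathbf{y})=1_B$.

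Uniqueness of $\mathbf{y}$ is immediate, since any other point has value $0_B\neq 1_B$. The case $s=\emptyset$ is trivial, because then $\delta_\epsilon=\zeta^{(1)}$ is the neutral element and is fixed by $h$. Additive idempotence is not used in the argument beyond the standing assumptions that guarantee Stability, which Lemma \ref{lemma:16} needs.
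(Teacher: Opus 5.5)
Your proof is correct and follows the same route as the paper: idempotence of the values of $h(\delta_{\mathbf{x}})$, marginal sum equal to $1_B$, Lemma~\ref{lemma:7} combined with the hypothesis on $B$, and a pigeonhole count. You organize it more cleanly by first proving that the supports are nonempty and pairwise disjoint, which makes explicit two points that the paper's contradiction argument uses only tacitly.

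One small correction to a side remark: if $1_B=0_B$ the statement is not vacuous but false, because every $\mathbf{y}$ works and uniqueness fails when $|\Omega_s|\ge 2$. So $0\neq 1$ has to be a tacit standing assumption here, as it also is in the paper.
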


\begin{proof}
For $\mathbf{z} \in \Omega_s $, 
it is known that
$ (\delta_\mathbf{x} \otimes \delta_\mathbf{x})(\mathbf{z}) 
= \delta_\mathbf{x}(\mathbf{z}) \times \delta_\mathbf{x}(\mathbf{z}). $
If $ \mathbf{z} = \mathbf{x} $, then $ (\delta_\mathbf{x} \otimes \delta_\mathbf{x})(\mathbf{x}) = 1_A \times 1_A = 1_A = \delta_\mathbf{x}(\mathbf{x})$.
When $ \mathbf{z} \neq \mathbf{x} $,  $(\delta_\mathbf{x} \otimes \delta_\mathbf{x})(\mathbf{z})=0_A \times 0_A = 0_A = \delta_\mathbf{x}(\mathbf{z}) $.
Thus $ \delta_\mathbf{x} \otimes \delta_\mathbf{x} = \delta_\mathbf{x}$. That is, $ \delta_\mathbf{x}$ is  idempotent under combination.

Since $ h $ is a homomorphism, we have
$
h(\delta_\mathbf{x}) \otimes h(\delta_\mathbf{x}) = h(\delta_\mathbf{x} \otimes \delta_\mathbf{x}) = h(\delta_\mathbf{x}).
$
Hence, for any $ \mathbf{z} \in \Omega_s $, 
\begin{center}
$h(\delta_\mathbf{x})(\mathbf{z})=
[h(\delta_\mathbf{x}) \otimes h(\delta_\mathbf{x})](\mathbf{z}) = h(\delta_\mathbf{x})(\mathbf{z}) \times h(\delta_\mathbf{x})(\mathbf{z}).
$
\end{center}
This shows that each $h(\delta_\mathbf{x})(\mathbf{z})$ is a multiplicative idempotent element in the semiring $B$. 

Secondly, the sum of all output value of $ h(\delta_\mathbf{x}) $ equals $ 1_B $.
By Lemma \ref{lemma:1} and the fact that $h$ preserves marginalization,
\begin{center}
$ h\left(\zeta^{(1_A)}\right)
=h\left[\left(\delta_{\mathbf{x}}\right)^{\downarrow \emptyset} \right]
= \left[ h(\delta_{\mathbf{x}}) \right]^{\downarrow \emptyset}$.
\end{center}	
Thus
\begin{center} 
$1_B= f(1_A)
= h\left(\zeta^{(1_A)}\right) (\epsilon) 
= \left[ h(\delta_{\mathbf{x}}) \right]^{\downarrow \emptyset}(\epsilon)
=\sum\limits_{\mathbf{z} \in \Omega_s}  h(\delta_{\mathbf{x}})(\mathbf{z})$.
\end{center}	

Now suppose there exists an ${\mathbf{x}}_1$ such that $h(\delta_{{\mathbf{x}}_1})$ takes nonzero values at two distinct elements ${\mathbf{x}}_2, {\mathbf{x}}_3$. By Lemma \ref{lemma:7}, the remaining $|\Omega_s|-1$ functions $h(\delta_{\mathbf{x}})$ with ${\mathbf{x}} \neq {\mathbf{x}}_1$ can only take nonzero values at configurations other than ${\mathbf{x}}_2, {\mathbf{x}}_3$. The number of remaining available configurations is $|\Omega_s| - 2$. Consequently, at least two functions take nonzero values simultaneously at a same configuration, such as $h(\delta_{\mathbf{x}_4})(\mathbf{x}_6)\neq 0$ and 
$h(\delta_{\mathbf{x}_5})(\mathbf{x}_6)\neq 0$. If $B$ is a semiring with no multiplicative idempotent zero divisors, it yields $h(\delta_{\mathbf{x}_4})(\mathbf{x}_6)\times 
h(\delta_{\mathbf{x}_5})(\mathbf{x}_6)\neq 0$. It is a contradiction with Lemma \ref{lemma:7}.

Therefore, each $h(\delta_{\mathbf{x}})$ must take a nonzero value at exactly one configuration, and by the sum condition $1_B$, this value must be exactly $1_B$. That is, there exists a unique $\mathbf{y} \in \Omega_s$ such that
$h(\delta_\mathbf{x})(\mathbf{y}) = 1_B, $
and for all $ \mathbf{z}\neq \mathbf{y} $,
$h(\delta_\mathbf{x})(\mathbf{z}) = 0_B $.
\end{proof}

Proposition \ref{pro:3} shows that for each $\mathbf{x}\in \Omega_s$, there exists a unique $\mathbf{y}\in \Omega_s$ such that $h(\delta_\mathbf{x}^A)=\delta_\mathbf{y}^B$. The same conclusion holds for the isomorphism $h^{-1}$. Then, for each single variable $X \in r$, we can define a permutation $\sigma_X: \Omega_X \to \Omega_X$ such that for any $\omega \in \Omega_X$,
$h(\delta_\omega^{A}) = \delta_{\sigma_X(\omega)}^{B} $.

For any domain $s \subseteq r$, define a mapping $\sigma_s: \Omega_s \to \Omega_s$ by:
$$\sigma_s\left((\omega_X)_{X \in s}\right) 
= (\sigma_X(\omega_X))_{X \in s}.$$
That is, $\sigma_s$ acts componentwise.   

\begin{lemma}\label{lemma:12}
{\rm If $t \subseteq s\subseteq r$ and $\mathbf{x}\in \Omega_s$, then
$\left[\sigma_s(\mathbf{x})\right]^{\downarrow t} = \sigma_t(\mathbf{x}^{\downarrow t})$. }
\end{lemma}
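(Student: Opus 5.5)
This is a purely combinatorial fact about the componentwise map $\sigma_s$; it does not need any property of $h$ beyond the definition of $\sigma_s$ through the single-variable permutations $\sigma_X$. The plan is to unfold both sides as tuples indexed by the variables of $t$ and compare them coordinate by coordinate.

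First, write $\mathbf{x}=(\omega_X)_{X\in s}$ with $\omega_X\in\Omega_X$. Projection onto $t$ simply restricts the index set, so
$$\mathbf{x}^{\downarrow t}=(\omega_X)_{X\in t}.$$
Applying the definition of $\sigma_t$ to this restricted tuple gives
$$\sigma_t(\mathbf{x}^{\downarrow t})=(\sigma_X(\omega_X))_{X\in t}.$$

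Second, by the definition of $\sigma_s$, the left-hand side starts from $\sigma_s(\mathbf{x})=(\sigma_X(\omega_X))_{X\in s}$. Projecting this onto $t$ restricts the index set again, which yields
$$[\sigma_s(\mathbf{x})]^{\downarrow t}=(\sigma_X(\omega_X))_{X\in t}.$$
The two configurations of $t$ agree at every $X\in t$, so they are equal.

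Third, I will treat the degenerate case $t=\emptyset$ separately, for the sake of rigor. Both sides lie in $\Omega_\emptyset=\{\epsilon\}$, so both equal $\epsilon$ by the convention ${\bf x}^{\downarrow\emptyset}=\epsilon$. For this to make sense, $\sigma_\emptyset$ has to be the unique map $\Omega_\emptyset\to\Omega_\emptyset$, namely $\sigma_\emptyset(\epsilon)=\epsilon$, which is the empty-product reading of the componentwise definition.

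I do not expect any real obstacle here. The only point needing care is to state explicitly that projection of a configuration is restriction of its coordinate family. This is exactly how the paper decomposes ${\bf x}=({\bf x}^{\downarrow t},{\bf x}^{\downarrow s-t})$. Once that is said, the identity is the observation that a coordinatewise map commutes with forgetting coordinates.
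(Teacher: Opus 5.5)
Your proposal is correct and follows essentially the same route as the paper: both write $\mathbf{x}=(\omega_X)_{X\in s}$ and observe that each side equals $(\sigma_X(\omega_X))_{X\in t}$, because projection just restricts the coordinate family. Your separate treatment of $t=\emptyset$ (with $\sigma_\emptyset(\epsilon)=\epsilon$) is a harmless extra that the paper leaves implicit.
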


\begin{proof} 
Let  $\mathbf{x} = (\omega_X)_{X \in s} \in \Omega_s$. According to the definition of $\tau_s$,  
for $t \subseteq s$, the marginalization $\left[\sigma_s(\mathbf{x})\right]^{\downarrow t}$ keeps the components corresponding to the variables in $t$, i.e.,  
$$\left[\sigma_s(\mathbf{x})\right]^{\downarrow t}= 
\big( \sigma_X(\omega_X) \big)_{X \in t}.$$

On the other hand, $\mathbf{x}^{\downarrow t}= (\omega_X)_{X \in t}$, and hence  
$$\sigma_t(\mathbf{x}^{\downarrow t}) = \big( \sigma_X(\omega_X) \big)_{X \in t}.$$
Therefore, $\left[\sigma_s(\mathbf{x})\right]^{\downarrow t}= \sigma_t(\mathbf{x}^{\downarrow t})$ holds for each $\mathbf{x} \in \Omega_s$.
\end{proof}

We refer to this as the commutativity of permutation and projection, or Property C for short. The mapping $\sigma_s^{-1}$ clearly possesses this Property C as well.

Combined with the mapping $\sigma_s$, for any $\psi \in \Psi_s$, define a valuation $T(\psi): \Omega_s \to  B$ by:
$$
T(\psi)(\mathbf{x}) = \psi\big( \sigma_s(\mathbf{x}) \big), \quad \forall \mathbf{x} \in \Omega_s.
$$
In other words, $T(\psi) = \psi \circ \sigma_s$. 
Clearly $d(T(\psi)) = d(\psi)$. 
Thus, using the mapping $T$, we construct a new valuation algebra $\Psi' = \{T(\psi) : \psi \in \Psi \}$.

\begin{proposition}\label{pro:4}
{\rm $T: \Psi \to \Psi'$ is an isomorphism of valuation algebras.}
\end{proposition}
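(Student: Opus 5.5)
We need to show that $T$ is a bijection, commutes with combination and marginalization, and fixes neutral elements. Here $\Psi'$ carries the operations inherited from the semiring $B$, so $\Psi'$ is simply $\Psi$ viewed as a set. The whole proof is pointwise reindexing along $\sigma_s$, with Lemma \ref{lemma:12} (Property C) supplying the compatibility with projection of configurations.

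\textbf{Bijectivity.} Since each $\sigma_X$ is a permutation of $\Omega_X$ and $\sigma_s$ acts componentwise, $\sigma_s$ is a permutation of $\Omega_s$. We set $T^{-1}(\psi')=\psi'\circ\sigma_s^{-1}$ for $\psi'$ with domain $s$. Then $T^{-1}\circ T$ and $T\circ T^{-1}$ are identities, and both preserve domains, so $T$ is a bijection onto $\Psi'$.

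\textbf{Combination.} Let $d(\phi)=s$, $d(\psi)=t$ and $\mathbf{x}\in\Omega_{s\cup t}$. We compute $T(\phi\otimes\psi)(\mathbf{x})=\phi([\sigma_{s\cup t}(\mathbf{x})]^{\downarrow s})\times\psi([\sigma_{s\cup t}(\mathbf{x})]^{\downarrow t})$. By Lemma \ref{lemma:12} this equals $\phi(\sigma_s(\mathbf{x}^{\downarrow s}))\times\psi(\sigma_t(\mathbf{x}^{\downarrow t}))=(T(\phi)\otimes T(\psi))(\mathbf{x})$.

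\textbf{Marginalization.} This is the step needing care. For $t\subseteq s$ and $\mathbf{x}\in\Omega_t$ we have
\[
T(\phi^{\downarrow t})(\mathbf{x})=\sum_{\mathbf{z}\in\Omega_s:\ \mathbf{z}^{\downarrow t}=\sigma_t(\mathbf{x})}\phi(\mathbf{z}).
\]
We substitute $\mathbf{z}=\sigma_s(\mathbf{w})$, which is legitimate because $\sigma_s$ is a bijection of $\Omega_s$. By Property C, $\mathbf{z}^{\downarrow t}=\sigma_t(\mathbf{w}^{\downarrow t})$. Since $\sigma_t$ is injective, the condition $\mathbf{z}^{\downarrow t}=\sigma_t(\mathbf{x})$ becomes $\mathbf{w}^{\downarrow t}=\mathbf{x}$. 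The sum therefore equals $\sum_{\mathbf{w}^{\downarrow t}=\mathbf{x}}\phi(\sigma_s(\mathbf{w}))=T(\phi)^{\downarrow t}(\mathbf{x})$. This reindexing is the only nontrivial point: it requires $\sigma_s$ to be a bijection on all of $\Omega_s$ and $\sigma_t$ to be injective, and both hold by construction.

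\textbf{Neutral elements.} The neutral element $e_s$ is constantly $1$, so $e_s\circ\sigma_s=e_s$.

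Taken together, these facts show that $\Psi'$ is closed under the operations and is itself a valuation algebra, namely $\Psi$ transported by $T$. Hence $T$ is an isomorphism of valuation algebras.
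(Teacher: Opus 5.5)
Your proof is correct and follows the paper's route. The paper also reindexes pointwise along $\sigma_s$, using Property C (Lemma~\ref{lemma:12}) for combination, a change of summation index for marginalization, and the constancy of $e_s$ for neutrality. The differences are cosmetic: you exhibit the inverse $\psi'\mapsto\psi'\circ\sigma_s^{-1}$ (which also shows $\Psi'=\Psi$ as sets), whereas the paper checks only injectivity and gets surjectivity from the definition of $\Psi'$; and you make explicit that the reindexing in the marginalization step uses injectivity of $\sigma_t$, which the paper leaves as an asserted one-to-one correspondence.
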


\begin{proof}
1. Let $\psi_1, \psi_2 \in \Psi$ with domains $s_1, s_2$ respectively.  
For any $\mathbf{x} \in \Omega_{s_1 \cup s_2}$,  
$$
T(\psi_1 \otimes \psi_2)(\mathbf{x})
= (\psi_1 \otimes \psi_2)(\sigma_{s_1\cup s_2}(\mathbf{x})) 
= \psi_1\big( (\sigma_{s_1\cup s_2}(\mathbf{x}))^{\downarrow s_1} \big) 
\times 
\psi_2\big( (\sigma_{s_1\cup s_2}(\mathbf{x}))^{\downarrow s_2} \big).
$$  
By Property C, $(\sigma_{s_1\cup s_2}(\mathbf{x}))^{\downarrow s_i} = \sigma_{s_i}(\mathbf{x}^{\downarrow s_i})$,  
so  
$$ T(\psi_1 \otimes \psi_2)(\mathbf{x})
= \psi_1(\sigma_{s_1}(\mathbf{x}^{\downarrow s_1})) \times \psi_2(\sigma_{s_2}(\mathbf{x}^{\downarrow s_2}))
= T(\psi_1)(\mathbf{x}^{\downarrow s_1}) \times T(\psi_2)(\mathbf{x}^{\downarrow s_2}).
$$  
It is exactly $(T(\psi_1) \otimes T(\psi_2))(\mathbf{x})$.  Hence $T$ preserves combination of valuations.

2. For $\psi \in \Psi_s$, $t \subseteq s$, and any $\mathbf{y} \in \Omega_t$,  
\begin{center}
$[T(\psi)^{\downarrow t}](\mathbf{y})
= \sum\limits_{{\mathbf{x} \in \Omega_s, \mathbf{x}^{\downarrow t} = \mathbf{y}}} T(\psi)(\mathbf{x}) 
= \sum\limits_{{\mathbf{x} \in \Omega_s, \mathbf{x}^{\downarrow t} = \mathbf{y}}} \psi(\sigma_s(\mathbf{x})).
$
\end{center}  
Let $\mathbf{x}' = \sigma_s(\mathbf{x})$. Then $\mathbf{x}'^{\downarrow t} = \sigma_t(\mathbf{y})$ (by Property C).  
There is a one-to-one correspondence between the elements of set $\{\mathbf{x} \in \Omega_s: \mathbf{x}^{\downarrow t} = \mathbf{y}\}$ and set $\{\mathbf{x}' \in \Omega_s: \mathbf{x}'^{\downarrow t}	= \sigma_t(\mathbf{y})\}$.
So  
\begin{center}
$[T(\psi)^{\downarrow t}](\mathbf{y})
= \sum\limits_{{\mathbf{x}' \in \Omega_s, \mathbf{x}'^{\downarrow t} 
		= \sigma_t(\mathbf{y})}} \psi(\mathbf{x}')
= \psi^{\downarrow t}(\sigma_t(\mathbf{y}))
= [T(\psi^{\downarrow t})](\mathbf{y}).
$
\end{center}  
Thus $\left[T(\psi)\right]^{\downarrow t} = T(\psi^{\downarrow t})$.

3. For any domain $s \subseteq r$, let $e_s$ be the mapping on domain $s$ with constant value $1$. 	
By the definition of $T$, for any $\mathbf{x} \in \Omega_s$,  
$ T(e_s)(\mathbf{x}) 
= e_s\big( \sigma_s(\mathbf{x}) \big)= 1.$  
This means $T(e_s)$ is the neutral element on domain $s$ in  $\Psi'$.

4. Let $\psi_1, \psi_2 \in \Psi_s$ with $T(\psi_1)=T(\psi_2)$.
For all $\mathbf{x} \in \Omega_s$,  
let $\mathbf{y}=\sigma^{-1}_s(\mathbf{x})$. Then 
$T(\psi_1)(\mathbf{y})=T(\psi_2)(\mathbf{y})$, that is,
$\psi_1(\sigma_s(\mathbf{y}))=\psi_2(\sigma_s(\mathbf{y}))$. It follows that 
$\psi_1(\mathbf{x})= \psi_2(\mathbf{x})$.
As this holds for all $\mathbf{x} \in \Omega_s$, we conclude that $\psi_1=\psi_2$. Thus $T$ is injective.  

Therefore $T: \Psi \to \Psi'$ is an isomorphism of valuation algebras. 
\end{proof}

Next we show below that it is possible to construct a new isomorphism $h'$ between valuation algebras $(\Phi, D)$ and $(\Psi', D)$.

\begin{proposition}\label{pro:5}
{\rm Let $h: \Phi \to \Psi$ be a valuation algebra isomorphism, and suppose that $A$ and $B$ are two additive  idempotent semirings with no multiplicative idempotent zero divisors. Then there exists a valuation algebra isomorphism $h': \Phi \to \Psi'$ such that for any domain $s \subseteq r$ and any $\mathbf{z} \in \Omega_s$,
$ h'(\delta_{\mathbf{z}}^{A}) = \delta_{\mathbf{z}}^{B}.$}
\end{proposition}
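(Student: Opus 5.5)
The plan is to take $h' = T\circ h$, where $T:\Psi\to\Psi'$ is the map of Proposition \ref{pro:4} built from the componentwise permutations $\sigma_s$. Since $h$ is a valuation algebra isomorphism by hypothesis and $T$ is one by Proposition \ref{pro:4}, the composite $h'$ preserves combination, marginalization and neutral elements, and it is bijective. All three homomorphism conditions pass through a composition directly, and $d(h(\phi))=d(\phi)$ by Lemma \ref{lemma:16}, so $T$ is applied to $h(\phi)$ on the correct domain. So $h':\Phi\to\Psi'$ is an isomorphism, and it remains to check $h'(\delta^A_{\mathbf z})=\delta^B_{\mathbf z}$.

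The key intermediate claim is that $h(\delta^A_{\mathbf z}) = \delta^B_{\sigma_s(\mathbf z)}$ for every $s\subseteq r$ and every $\mathbf z\in\Omega_s$, not only for single variables. Write $\mathbf z=(\omega_X)_{X\in s}$. I would first observe that
$$\delta^A_{\mathbf z} = \bigotimes_{X\in s} \delta^A_{\omega_X},$$
where $\delta^A_{\omega_X}$ has domain $\{X\}$. Indeed, at $\mathbf x\in\Omega_s$ the product $\prod_{X\in s}\delta^A_{\omega_X}(\mathbf x^{\downarrow\{X\}})$ equals $1$ exactly when every component of $\mathbf x$ agrees with $\mathbf z$, and otherwise some factor is $0$, so the product is $0$. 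Since $h$ preserves combination and $h(\delta^A_{\omega})=\delta^B_{\sigma_X(\omega)}$ by the definition of $\sigma_X$, we get
$$h(\delta^A_{\mathbf z})=\bigotimes_{X\in s}\delta^B_{\sigma_X(\omega_X)}=\delta^B_{\sigma_s(\mathbf z)},$$
using the same decomposition in $B$. The case $s=\emptyset$ is handled separately: $\delta_\epsilon=\zeta^{(1)}$ is the neutral element, so $h(\delta_\epsilon)=\delta_\epsilon$, consistent with $\sigma_\emptyset=\mathrm{id}$.

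Then for $\mathbf x\in\Omega_s$,
$$h'(\delta^A_{\mathbf z})(\mathbf x)=\delta^B_{\sigma_s(\mathbf z)}(\sigma_s(\mathbf x)).$$
This equals $1_B$ if $\sigma_s(\mathbf x)=\sigma_s(\mathbf z)$ and $0_B$ otherwise. Since $\sigma_s$ is a bijection, being componentwise a product of permutations, the condition $\sigma_s(\mathbf x)=\sigma_s(\mathbf z)$ holds iff $\mathbf x=\mathbf z$. Hence $h'(\delta^A_{\mathbf z})=\delta^B_{\mathbf z}$.

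I expect the main obstacle to be justifying that each $\sigma_X$ is a well-defined permutation of $\Omega_X$; the argument above relies on the bijectivity of $\sigma_s$. Proposition \ref{pro:3} gives a well-defined map $\omega\mapsto\sigma_X(\omega)$. Injectivity follows because $h$ is injective and the $\delta_\omega$ are distinct. A map from a finite set to itself that is injective is also surjective, so $\sigma_X$ is a permutation; alternatively, apply Proposition \ref{pro:3} to $h^{-1}$, using the hypothesis on $A$. I would state this explicitly before the main computation, together with the product decomposition of $\delta_{\mathbf z}$, since that decomposition is what transfers the single-variable information of Proposition \ref{pro:3} to arbitrary domains.
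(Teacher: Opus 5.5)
Your proposal is correct and follows the paper's route: set $h'=T\circ h$, then evaluate $h'(\delta^A_{\mathbf z})(\mathbf x)=\delta^B_{\sigma_s(\mathbf z)}(\sigma_s(\mathbf x))$ and use the bijectivity of $\sigma_s$. Your decomposition $\delta^A_{\mathbf z}=\bigotimes_{X\in s}\delta^A_{\omega_X}$ also supplies the identity $h(\delta^A_{\mathbf z})=\delta^B_{\sigma_s(\mathbf z)}$ for domains with several variables, which the paper uses without justification even though $\sigma_s$ is defined only componentwise from the single-variable maps $\sigma_X$.
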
 

\begin{proof}
Define a new isomorphism $h' = T \circ h$. Clearly, 
$h': \Phi \to \Psi'$ is a composition of isomorphisms.
For any domain $s$ and any $\mathbf{z} \in \Omega_s$,  
we have
$$ h'(\delta_{\mathbf{z}}^{A}) 
= T \circ h(\delta_{\mathbf{z}}^{A})
= T\big( \delta_{\sigma_s(\mathbf{z})}^{B} \big).$$
For any $\mathbf{x} \in \Omega_s$,
$$  h'(\delta_{\mathbf{z}}^{A}) (\mathbf{x})=
T(\delta_{\sigma_s(\mathbf{z})}^{B})(\mathbf{x}) = \delta_{\sigma_s(\mathbf{z})}^{B}(\sigma_s(\mathbf{x})).
$$
The above result equals $1_B$ if and only if $\sigma_s(\mathbf{x}) = \sigma_s(\mathbf{z})$, i.e., $\mathbf{x}= \mathbf{z}$. 
Therefore
\begin{center}
$ h'(\delta_{\mathbf{z}}^{A})(\mathbf{x})
= \delta_{\mathbf{z}}^{B}(\mathbf{x})$.
\end{center}
Hence $ h'(\delta_{\mathbf{z}}^{A}) =\delta_{\mathbf{z}}^{B}$
by the arbitrariness of $\mathbf{x}$.
\end{proof}

The structure of a semiring-induecd valuation algebra is completely determined by the semiring $B$ and the variable set $r$ (together with theirs configurations).  
Hence $(\Psi', D)$ and $(\Psi, D)$ are essentially identical valuation algebras. 
In the following we may assume that 
$h(\delta_\mathbf{z}^A)(\mathbf{z}) = 1_B$ under the isomorphism $h$.

\begin{lemma}\label{lemma:2}
{\rm If $\mathbf{x}\in \Omega_s$ and $a\in A$, then
$h(\delta_{\mathbf{x}}^{(a)})(\mathbf{x})= f(a)$.}
\end{lemma}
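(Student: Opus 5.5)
The natural route is to factor $\delta_{\mathbf{x}}^{(a)}$ through the empty-domain valuation $\zeta^{(a)}$ and then push everything through $h$ using the homomorphism properties. By Lemma~\ref{lemma:5} we have $\delta_{\mathbf{x}}^{(a)}=\zeta^{(a)}\otimes\delta_{\mathbf{x}}$. Since $h$ preserves combination,
\[
h(\delta_{\mathbf{x}}^{(a)})=h(\zeta^{(a)})\otimes h(\delta_{\mathbf{x}}).
\]
By Lemma~\ref{lemma:16}, $h(\zeta^{(a)})$ has empty domain, so it is the valuation $\zeta^{(f(a))}$ by the definition of $f$. The other factor $h(\delta_{\mathbf{x}})$ has domain $s$.

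Next I evaluate the combination at $\mathbf{x}$. The definition of combination, together with $\mathbf{x}^{\downarrow\emptyset}=\epsilon$, gives
\[
h(\delta_{\mathbf{x}}^{(a)})(\mathbf{x})=h(\zeta^{(a)})(\epsilon)\times h(\delta_{\mathbf{x}})(\mathbf{x})=f(a)\times h(\delta_{\mathbf{x}})(\mathbf{x}).
\]

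The only step that needs care is identifying $h(\delta_{\mathbf{x}})(\mathbf{x})=1_B$. This is where the normalization after Proposition~\ref{pro:5} is used: we replaced $h$ by $h'=T\circ h$ and agreed to write $h$ for it, so $h(\delta_{\mathbf{x}}^A)=\delta_{\mathbf{x}}^B$ and in particular $h(\delta_{\mathbf{x}})(\mathbf{x})=1_B$. I must also check that this replacement does not change $f$, i.e.\ that $h'(\zeta^{(a)})=h(\zeta^{(a)})$. This holds because $\sigma_\emptyset$ is the identity on $\Omega_\emptyset=\{\epsilon\}$, so $T$ acts trivially on empty-domain valuations.

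With both facts in hand, the display gives $h(\delta_{\mathbf{x}}^{(a)})(\mathbf{x})=f(a)\times 1_B=f(a)$. As a byproduct, the same computation at $\mathbf{z}\neq\mathbf{x}$ yields $f(a)\times 0_B=0_B$, so in fact $h(\delta_{\mathbf{x}}^{(a)})=\delta_{\mathbf{x}}^{(f(a))}$, which will be useful for proving that $f$ preserves addition.
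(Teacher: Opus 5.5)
Your proof is correct and follows the paper's argument: factor $\delta_{\mathbf{x}}^{(a)}=\zeta^{(a)}\otimes\delta_{\mathbf{x}}$ via Lemma~\ref{lemma:5}, push it through $h$, evaluate at $\mathbf{x}$, and use $h(\delta_{\mathbf{x}})(\mathbf{x})=1_B$. You correctly trace that last fact to the normalization after Proposition~\ref{pro:5}, where the paper instead cites Lemma~\ref{lemma:7}, which does not give it; you also check that $T$ fixes empty-domain valuations, so $f$ is unchanged when $h$ is replaced by $T\circ h$, a point the paper leaves implicit.
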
	
\begin{proof}
By Lemma \ref{lemma:7}, we have $h(\delta_{\mathbf{x}}^A)(\mathbf{x})= 1_B$.
Then, by Lemma \ref{lemma:5}, 
\begin{center}
$h(\delta_{\mathbf{x}}^{(a)})(\mathbf{x})
= h(\zeta^{(a)}\otimes \delta_{\mathbf{x}}^A)(\mathbf{x})$\\
$\hspace{2.4cm}=\left[h(\zeta^{(a)})\otimes h(\delta_{\mathbf{x}}^A)\right](\mathbf{x})$\\
$\hspace{2.5cm}= h(\zeta^{(a)})(\epsilon)\times h(\delta_{\mathbf{x}}^A)(\mathbf{x})$\\
$\hspace{1.8cm}=f(a)\times h(\delta_{\mathbf{x}}^A)(\mathbf{x})$\\
$\hspace{0.3cm}=f(a)$.
\end{center}
\end{proof}


\subsection{Preservation of Addition and the Main Theorem}

With the behavior of $h$ on elementary valuations fully characterized, we are now in a position to prove that the induced mapping $f$ preserves addition, which is the remaining property required for a semiring isomorphism.

\begin{lemma}\label{lemma:4}
{\rm Let $(\Phi, D)$ and $(\Psi, D)$ be two valuation algebras  induced by the additive idempotent semirings $A$ and $B$ with no multiplicative idempotent zero divisors, respectively. Suppose that $h: \Phi \to \Psi$ is an isomorphism. For any valuation $\theta \in \Phi_s$ and any $\mathbf{x} \in \Omega_s$, we have
$$ h(\theta)(\mathbf{x}) = f( \theta(\mathbf{x})).$$ }
\end{lemma}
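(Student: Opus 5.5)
The plan is to localize $\theta$ at a single configuration by combining it with an elementary valuation, and then read off the value using Lemma \ref{lemma:2}. We work under the normalization fixed after Proposition \ref{pro:5}, namely $h(\delta_{\mathbf{z}}^A)=\delta_{\mathbf{z}}^B$ for every $s\subseteq r$ and $\mathbf{z}\in\Omega_s$. This follows from Proposition \ref{pro:3} together with the replacement of $h$ by $T\circ h$. Fix $\theta\in\Phi_s$ and $\mathbf{x}\in\Omega_s$, and put $a=\theta(\mathbf{x})$.

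The first step is a pointwise computation in $A$. For every $\mathbf{z}\in\Omega_s$ we have $(\theta\otimes\delta_{\mathbf{x}}^A)(\mathbf{z})=\theta(\mathbf{z})\times\delta_{\mathbf{x}}^A(\mathbf{z})$. This equals $a$ when $\mathbf{z}=\mathbf{x}$ and $0_A$ otherwise. Hence $\theta\otimes\delta_{\mathbf{x}}^A=\delta_{\mathbf{x}}^{(a)}$, by the same kind of verification as in Lemma \ref{lemma:5}.

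The second step applies $h$ and evaluates at $\mathbf{x}$. Since $h$ preserves combination and domains (Lemma \ref{lemma:16}),
\[
h(\delta_{\mathbf{x}}^{(a)})(\mathbf{x})=[h(\theta)\otimes h(\delta_{\mathbf{x}}^A)](\mathbf{x})=h(\theta)(\mathbf{x})\times h(\delta_{\mathbf{x}}^A)(\mathbf{x})=h(\theta)(\mathbf{x})\times 1_B=h(\theta)(\mathbf{x}).
\]
The left-hand side equals $f(a)=f(\theta(\mathbf{x}))$ by Lemma \ref{lemma:2}. Combining the two gives $h(\theta)(\mathbf{x})=f(\theta(\mathbf{x}))$.

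The main obstacle is justifying $h(\delta_{\mathbf{x}}^A)(\mathbf{x})=1_B$ at the \emph{same} configuration $\mathbf{x}$. Proposition \ref{pro:3} alone only gives $h(\delta_{\mathbf{x}}^A)=\delta_{\mathbf{y}}^B$ for some $\mathbf{y}$, possibly different from $\mathbf{x}$. So the argument depends on the reduction to $h'=T\circ h$ from Proposition \ref{pro:5}. One must also check that $h'$ induces the same $f$ on empty-domain valuations. This holds because $\sigma_\emptyset$ is the identity, so $T$ acts trivially on $\Psi_\emptyset$, and the lemma then holds verbatim for $h'$.

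If the statement is read for the original $h$ without normalization, the same computation gives $h(\theta)(\sigma_s(\mathbf{x}))=f(\theta(\mathbf{x}))$. This follows from $h(\theta)(\sigma_s(\mathbf{x}))=(T h(\theta))(\mathbf{x})$ and is equally sufficient for the later proof that $f$ preserves addition.
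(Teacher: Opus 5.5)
Your proof is correct and is essentially the paper's own argument. The paper combines $\theta$ with $\delta_{\mathbf{x}}^{(a)}$ for arbitrary $a$, invokes Lemma \ref{lemma:2} and Lemma \ref{lemma:3}, and then sets $a=1_A$; you take $a=1_A$ from the start, so you never need multiplicativity of $f$. You also correctly make explicit what the paper leaves as a ``we may assume'': the identity depends on the normalization $h(\delta_{\mathbf{x}}^A)(\mathbf{x})=1_B$ obtained via $h'=T\circ h$, which leaves $f$ unchanged since $T$ is trivial on $\Psi_\emptyset$, and without it one only gets $h(\theta)(\sigma_s(\mathbf{x}))=f(\theta(\mathbf{x}))$.
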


\begin{proof}
Let $a\in A$, for the combination of $\theta$ and $\delta_\mathbf{x}^{(a)}$:
$$(\theta \otimes \delta_\mathbf{x}^{(a)})(\mathbf{x}') = \theta(\mathbf{x}') \cdot \delta_\mathbf{x}^{(a)}(\mathbf{x}'), 
\forall \mathbf{x}' \in \Omega_s.$$
It is nonzero only at the configuration $\mathbf{x}$:
$(\theta \otimes \delta_\mathbf{x}^{(a)})(\mathbf{x}) = \theta(\mathbf{x}) \cdot a$. 
Thus $\theta \otimes \delta_\mathbf{x}^{(a)}= \delta_\mathbf{x}^{(\theta(\mathbf{x}) \cdot a)}$.
By Lemma \ref{lemma:2} and Lemma \ref{lemma:3}, we have
$$ h(\theta \otimes \delta_\mathbf{x}^{(a)}) (\mathbf{x})
=h(\delta_\mathbf{x}^{(\theta(\mathbf{x}) \cdot a)})(\mathbf{x})
=f \left( \theta(\mathbf{x}) \cdot a \right)
=f(\theta(\mathbf{x})) \cdot f(a).$$

On the other hand, since the isomorphism $h$ preserves the combination operation, we have
$h(\theta \otimes \delta_\mathbf{x}^{(a)}) = h(\theta) \otimes h(\delta_\mathbf{x}^{(a)})$. 
Therefore
$$ h(\theta \otimes \delta_\mathbf{x}^{(a)}) (\mathbf{x})= \left[ h(\theta) \otimes h(\delta_\mathbf{x}^{(a)}) \right] (\mathbf{x})= h(\theta)(\mathbf{x}) \cdot  h(\delta_\mathbf{x}^{(a)})(\mathbf{x}).$$ 
By Lemma \ref{lemma:2}, we obtain that
$ h(\theta \otimes \delta_\mathbf{x}^{(a)})(\mathbf{x}) = \  h(\theta)(\mathbf{x}) \cdot f(a)$.

Since $a \in A$ is arbitrary, we can take $a=1_A$, it follows that

{\centering \hspace{5cm}
$h(\theta)(\mathbf{x}) =f(\theta(\mathbf{x})).$
}
\end{proof}

\begin{lemma}\label{lemma:11}
{\rm The mapping $f$ preserves the addition operation.}
\end{lemma}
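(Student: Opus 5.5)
The plan is to read off addition from marginalization, using Lemma \ref{lemma:4} to control $h$ on every valuation and Definition \ref{definition:5} for commutation with $\downarrow$. The key choice is a valuation on a nonempty domain whose marginal to the empty domain equals $a_1+a_2$.

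Let $a_1,a_2\in A$. By the standing assumption of Section 2, some variable $X\in r$ has a frame with two distinct elements $\omega_1\neq\omega_2$. I will work with the domain $s=\{X\}$ and define $\theta\in\Phi_s$ by
\[
\theta(\omega_1)=a_1,\qquad \theta(\omega_2)=a_2,\qquad \theta(\omega)=0_A \ \text{otherwise}.
\]
Equivalently, $\theta$ is the pointwise sum of $\delta_{\omega_1}^{(a_1)}$ and $\delta_{\omega_2}^{(a_2)}$, but I will use only its explicit values. By the definition of marginalization,
\[
\theta^{\downarrow\emptyset}(\epsilon)=a_1+a_2+\sum 0_A=a_1+a_2,
\]
so $\theta^{\downarrow\emptyset}=\zeta^{(a_1+a_2)}$.

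Next I compute $f(a_1+a_2)$ in two ways. First,
\[
f(a_1+a_2)=h(\zeta^{(a_1+a_2)})(\epsilon)=h(\theta^{\downarrow\emptyset})(\epsilon)=[h(\theta)]^{\downarrow\emptyset}(\epsilon).
\]
The last equality holds because $h$ preserves marginalization. Unfolding the marginal gives
\[
[h(\theta)]^{\downarrow\emptyset}(\epsilon)=\sum_{\omega\in\Omega_X}h(\theta)(\omega),
\]
which is legitimate since $d(h(\theta))=s$ by Lemma \ref{lemma:16}. Lemma \ref{lemma:4} then gives $h(\theta)(\omega)=f(\theta(\omega))$ for every $\omega$. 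So the sum equals $f(a_1)+f(a_2)+\sum f(0_A)$. By Lemma \ref{lemma:13}, $f(0_A)=0_B$, so the remaining terms vanish and $f(a_1+a_2)=f(a_1)+f(a_2)$.

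The substantive work is already done in Lemma \ref{lemma:4}, which rests on the normalization $h(\delta_{\mathbf{x}})=\delta_{\mathbf{x}}$ obtained from Proposition \ref{pro:5}. The only point needing care here is the existence of two distinct configurations, so that $a_1$ and $a_2$ can occupy separate slots. That is exactly what the assumption $|\Omega_X|\ge 2$ guarantees. Without it, one could only produce sums $a+a$, and idempotency would make those trivial. Combining this lemma with Lemmas \ref{lemma:6}, \ref{lemma:13} and \ref{lemma:3} then yields that $f$ is a semiring isomorphism.
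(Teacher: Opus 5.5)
Your proposal is correct and is essentially the paper's proof: the same valuation on a single variable $X$ carrying $a_1$ at $\omega_1$, $a_2$ at $\omega_2$ and $0_A$ elsewhere is marginalized to $\emptyset$, and Lemma~\ref{lemma:4} is applied pointwise together with $h(\theta^{\downarrow\emptyset})=[h(\theta)]^{\downarrow\emptyset}$ to identify $f(a_1+a_2)$ with $f(a_1)+f(a_2)$. You read $f(a_1+a_2)$ directly off the definition of $f$ rather than through Lemma~\ref{lemma:4}, and you invoke $f(0_A)=0_B$ from Lemma~\ref{lemma:13} to discard the remaining terms; both only make explicit steps the paper leaves implicit.
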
 
\begin{proof}
Let $\omega_1 \neq \omega_2$ be two distinct values of variable
variable $X$ in $\Omega_X$. For any $a, b \in A$, define a valuation $\gamma: \Omega_X \to A$:
$$\gamma(\mathbf{x}) =\left\{
\begin {array}{ll}
a,&{\mbox{if}} \ \ \mathbf{x}= \omega_1;\\
b,&{\mbox{if}} \ \ \mathbf{x}= \omega_2;\\
0_A,&{\mbox{otherwise}}.
\end{array}
\right.$$
Then $\gamma^{\downarrow\emptyset}=\zeta^{(a+b)}$.
So, by Lemma \ref{lemma:4},
$$h(\gamma^{\downarrow\emptyset})(\epsilon)=
f\left(\gamma^{\downarrow\emptyset}(\epsilon)\right)
=f\left(\zeta^{(a+b)}(\epsilon)\right)=f(a+b).$$

On the other hand, by Lemma \ref{lemma:4}, we also obtain that  $$\left[h(\gamma)\right]^{\downarrow\emptyset}(\epsilon) = \sum_{\mathbf{x}\in \Omega_X} h(\gamma)(\mathbf{x})= \sum_{\mathbf{x}\in \Omega_X} f(\gamma(\mathbf{x}))=f(a)+f(b).$$	
Since $h$ is a homomorphism, we have $$h(\gamma^{\downarrow\emptyset})(\epsilon)
=\left[h(\gamma)\right]^{\downarrow\emptyset}(\epsilon), i.e.,
f(a+b)=f(a)+f(b).$$
It has shown that the mapping $f$ preserves the addition.
\end{proof}

By Lemma \ref{lemma:6}, Lemma \ref{lemma:13}, Lemma \ref{lemma:3} and Lemma \ref{lemma:11}, we obtain the conclusion as follows.

\begin{theorem}\label{Th:2}
{\rm Let $(\Phi,D)$ and $(\Psi,D)$ be two isomorphic valuation algebras induced by the semirings $A$ and $B$, respectively, where $A$ and $B$ are additively idempotent and have no multiplicative idempotent zero divisors. Then $f: A \to B$ is an isomorphism.}
\end{theorem}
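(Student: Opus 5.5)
The theorem should follow by assembling the lemmas of this section, all applied to the map $f(a)=h(\zeta^{(a)})(\epsilon)$. That map is well defined because, by Lemma \ref{lemma:16}, an isomorphism between stable valuation algebras preserves domains; additive idempotence supplies the Stability axiom. The order of the argument is as follows.

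\textbf{Bijectivity, unit, zero and product.} Lemma \ref{lemma:6} already gives that $f$ is a bijection. Lemma \ref{lemma:13} gives $f(0)=0$ and $f(1)=1$. Lemma \ref{lemma:3} gives $f(a_1a_2)=f(a_1)f(a_2)$. None of these three steps needs idempotence or the zero-divisor hypothesis; they only use that $h$ is a bijective homomorphism that fixes neutral elements and acts on the singleton frame $\Omega_\emptyset$.

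\textbf{Addition.} The remaining step is Lemma \ref{lemma:11}, and this is where the hypotheses on $A$ and $B$ are used. Lemma \ref{lemma:11} rests on Lemma \ref{lemma:4}, the pointwise identity $h(\theta)(\mathbf{x})=f(\theta(\mathbf{x}))$. That identity in turn needs two inputs.
\begin{enumerate}
\item Proposition \ref{pro:3}: each $h(\delta_{\mathbf{x}})$ is itself an elementary valuation $\delta_{\mathbf{y}}$. This uses idempotence of $\delta_{\mathbf{x}}$ under $\otimes$, the orthogonality relation of Lemma \ref{lemma:7}, the absence of multiplicatively idempotent zero divisors in $B$, and a pigeonhole count.
\item The normalization of Proposition \ref{pro:5}: composing $h$ with the relabelling isomorphism $T$ of Proposition \ref{pro:4}, we may assume $h(\delta_{\mathbf{x}})=\delta_{\mathbf{x}}$. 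Then Lemma \ref{lemma:2} gives $h(\delta_{\mathbf{x}}^{(a)})(\mathbf{x})=f(a)$.
\end{enumerate}
Replacing $h$ by $T\circ h$ does not change $f$, since on $\Omega_\emptyset$ the permutation $\sigma_\emptyset$ is the identity, so $T$ acts trivially on $\Psi_\emptyset$. Hence the $f$ obtained after normalization is the original one. With Lemma \ref{lemma:4} available, marginalizing a two-point valuation $\gamma$ on a variable with $|\Omega_X|\ge 2$ to $\emptyset$ yields $f(a+b)=f(a)+f(b)$.

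\textbf{Conclusion and main obstacle.} Combining the four lemmas, $f$ is a bijective map preserving $+$, $\times$, $0$ and $1$, i.e.\ a semiring isomorphism. The hard part of this chain is Proposition \ref{pro:3} together with the normalization step. I would double-check two points there. First, the pigeonhole count must really force each $h(\delta_{\mathbf{x}})$ to have a single nonzero value. Second, the resulting correspondence $\mathbf{x}\mapsto\mathbf{y}$ must be compatible with projection, so that it is genuinely the componentwise map $\sigma_s$; this compatibility is what makes $T$ a homomorphism.
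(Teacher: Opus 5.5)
Your proposal is correct and follows the paper's own proof, which assembles Lemmas \ref{lemma:6}, \ref{lemma:13}, \ref{lemma:3} and \ref{lemma:11}, the last resting on Lemma \ref{lemma:4}, Proposition \ref{pro:3} and the normalization of Proposition \ref{pro:5}; your remark that $T$ fixes $\Psi_\emptyset$, so that $T\circ h$ induces the same $f$, fills in a step the paper leaves implicit. Both points you flag hold: the pigeonhole count is sound because the $|\Omega_s|$ supports of the $h(\delta_{\mathbf x})$ are pairwise disjoint and nonempty (their values sum to $1_B\neq 0_B$ when $B$ is nontrivial), and the second point amounts to the identity $h(\delta_{\mathbf z})=\delta_{\sigma_s(\mathbf z)}$, which Proposition \ref{pro:5} uses without proof and which follows from $\delta_{\mathbf z}=\bigotimes_{X\in s}\delta_{\omega_X}$ for $\mathbf z=(\omega_X)_{X\in s}$ together with preservation of combination.
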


Through the above discussion, we have established the connection between semirings and the valuation algebras induced by these semirings in terms of algebraic structures.

\section{Conclusion}
The main contribution of this paper is Theorem 1 and Theorem 2, which establishes that, for additively idempotent semirings without multiplicatively idempotent zero divisors, the existence of an isomorphism between the valuation algebras induced by two semirings is equivalent to the existence of an isomorphism between the semirings themselves.

This result complements and extends the existing theory of valuation algebra homomorphisms initiated by Kohlas [3]. In his foundational work, Kohlas introduced the notion of valuation algebra homomorphisms and established several fundamental properties. However, the specific problem of characterizing when an isomorphism between semiring-induced valuation algebras descends to an isomorphism of the underlying semirings was not addressed. Our result is to provide a complete structural correspondence for the important subclass of semiring-induced valuation algebras. Thus, our work bridges the gap between the abstract theory of valuation algebras and the concrete algebraic structure of their semiring generators.

\nocite{*}
\bibliographystyle{fundam}
\bibliography{citations}


\end{document}